\documentclass{article}
\usepackage{romp}

\usepackage{amsmath,amsthm}
\usepackage{amsfonts}
\usepackage{amssymb}

\makeatletter
\newcommand\Label[1]{&\refstepcounter{equation}(\theequation)\ltx@label{#1}&}
\makeatother

\title{ Double-graded quantum superplane }
\author{ Andrew James Bruce \\ Mathematics Research Unit, University of Luxembourg\\
Maison du Nombre, L-4364 Esch-sur-Alzette, Luxembourg \\ e-mail: andrewjamesbruce@googlemail.com \\[2ex]
          Steven Duplij
                      \\ Center for Information Technology (WWU IT), Universit\"at M\"unster,\\ D-48149 M\"unster, Deutschland \\ e-mail: douplii@uni-muenster.de }

\begin{document}

\maketitle
\begin{abstract}
A $\mathbb{Z}_2 \times \mathbb{Z}_2$-graded generalisation of  the quantum superplane is proposed and studied. We construct a bicovariant calculus on what we shall refer to  as the \emph{double-graded quantum superplane}. The commutation rules between the coordinates, their differentials and partial derivatives are explicitly given. Furthermore, we show that an extended version of the double-graded quantum superplane admits a natural Hopf $\mathbb{Z}_2^2$-algebra structure.
\end{abstract}

\noindent
{\bf Keywords:} list your keywords here.

\section{Introduction}
Noncommutative geometry has  been playing an ever-increasing r\^{o}le in mathematics and physics over the past few decades (see for example \cite{con94,dup/sie/bag,man91}). At the scale at which quantum effects of the gravitational field are dominant, it is expected that space-time will depart from its classical smooth Riemannian structure. Upon rather general arguments, space-time is expected to be some kind of noncommutative geometry. Unfortunately, nature has so far provided few hints as to what one should expect from these generalised geometries. The fundamental objects at play here are associative algebras and differential calculi over them. Woronowicz \cite{woro89} initiated the study of quantum groups and their differential calculi as the basic objects in noncommutative geometry. This approach stresses that the properties of the quantum group are key to constructing differential calculi. A different approach follows Manin's philosophy (see \cite{man89})  that differential forms on noncommutative spaces are defined in terms of their noncommutative or quantum coordinates and the properties of quantum groups acting upon these spaces.  Wess and Zumino \cite{wes/zum91} used the approach of Manin to define a covariant differential calculus on the quantum hyperplane. The first description of Manin's quantum plane as a Hopf algebra is by Tahri \cite{tah98}. For quantisations of various superspaces and their corresponding differential calculi see \cite{bre93,cel16,fal/tah01,oza13,son91}. Nontrivial actions of quantized universal enveloping algebras on the quantum plane were considered in \cite{dup/sin3}. We remark that quantum groups (Hopf algebras), due to their tight relation with the Yang--Baxter equation, are important in conformal field theory, statistical mechanics, integrable systems, etc. Indeed, quantum groups, as a particular class of Hopf algebras, originated in the work of  Drinfel'd  and Jimbo (see \cite{dri87}) on quantum inverse scattering.  Today it is realised that many combinatorial aspects of physics have neat formulations in terms of Hopf algebras \cite{duc/bla/hor/pen/sol}.    \par
Inspired by the recently developed locally ringed space approach to $\mathbb{Z}_2^n$-manifolds (see \cite{bru/iba/pon19,bru/pon19b,bru/pon19a,Cov/Gra/Pon1,Cov/Gra/Pon2}), we examine  quantum $\mathbb{Z}_2^2$-planes, or as we prefer to call them, \emph{double-graded quantum superplanes}.   Such noncommutative geometries are the simplest examples of  noncommutative $\mathbb{Z}_2^n$-spaces $(n\geq 2)$. Much like supermanifolds, $\mathbb{Z}_2^n$-manifolds offer a `halfway house' on one's passage from classical geometry to noncommutative geometry.  Quantising superspaces and similar offers a deeper picture here as well as very workable examples of noncommutative geometries. Indeed, `non-anticommuting superspaces' have long been studied in physics because various background fields in string theory lead to noncommutative deformation of superspace.   For example, R-R field backgrounds lead to `$\theta-\theta$' deformations and gravitino backgrounds lead to  `$x-\theta$' deformations (see \cite{bo/gra/nie, sei03}). It is probably fair to say that the mathematics literature on `noncommutative superspaces' is not so developed (the reader may consult \cite{deG:2015} for an overview).  \par
We also point out that $\mathbb{Z}_2^n$-geometry, as well as the double-graded quantum superplane  sit comfortably within Majid's framework of braided geometry, see \cite{maj93,maj95,maj96} for  very accessible reviews. The general idea is to replace the standard bose-fermi sign factors with a more general braided relation.  Noncommutative geometry formulated in braided monoidal categories has been very successful and we must mention toric noncommutative geometry as a key example, see for example \cite{brn/sch/sza,bra/lan/sui,cir/lan/sza}. In this sense, we present a very specific example of a braided geometry inspired by $\mathbb{Z}_2^n$-geometry.  Loosely, a $\mathbb{Z}_2^n$-manifold is a ``manifold'' with coordinates that are assigned a degree in $\mathbb{Z}_2^n$ and their sign rule under exchange is given  in terms of the standard scalar product of their degrees. The geometry we study in this paper is a $q$-deformed version of the $\mathbb{Z}_2^2$-plane, which can be understood as the algebra $C^\infty(\mathbb{R})[[\xi, \theta, z]]$ subject to $\xi^2 =0$, $\theta^2 =0$, $\xi \theta = \theta \xi$, $\xi z = - z \xi$  and $\theta z =-  z \theta$.  The coordinate $x$ on $\mathbb{R}$ strictly commutes with everything.  Note that these relations are not super, i.e., not simply $\mathbb{Z}_2$-graded commutative. For example, if we assign degree $1$ to $\xi$ and $\theta$ then their commutation rule is not fully determined by their degrees. Furthermore, assigning degree $0$ or $1$ to $z$ leads to the same conclusion. Note that $z$ is not taken to be nilpotent. However, assigning degree $(0,0)$,$(0,1)$, $(1,0)$ and $(1,1)$ to $x$, $\xi$, $\theta$ and $z$, respectively, and then defining the commutation factor to be
$$a     b = (-1)^{\langle \textnormal{deg}(a), \textnormal{deg}(b) \rangle}     b     a,$$
where $a,b \in \{x, \xi, \theta, z \}$ and $\langle - , - \rangle$ is the standard scalar product, reproduces exactly the desired commutation rules. We must also mention the related notion of paragrassmann variables $\psi$, where $\psi^p = 0$ for some $p> 2$ (see for example \cite{fil/isa/kur92, fil/isa/kur93,ger/tka,ger/tka85}). The relation between paragrassmann variables, parastatistics and $\mathbb{Z}_2^n$-graded commutative algebras is via Green's  ansatz (see \cite{Gre,Vol})
 \par
The readers  attention should be brought to the fact that Scheunert proved a theorem reducing ``coloured'' Lie algebras to either Lie algebras or Lie superalgebras \cite{Sch}. Similarly, Neklyudova proved an analogue of this theorem for $\mathbb{Z}_2^n$-graded,  graded-commutative, associative algebras \cite{leites13}. Neither of these theorems rules out the study of $\mathbb{Z}_2^n$-geometry. One deals with very specific algebras when studying $\mathbb{Z}_2^n$-manifolds, for instance, and quite often trying to ``pullback''  supergeometric constructions to the category of $\mathbb{Z}_2^n$-manifolds is non-trivial.   The study of $\mathbb{Z}_2^n$-manifolds does not reduce to the study of supermanifolds. Moreover, in this paper, we study a particular $\mathbb{Z}_2^2$-graded, associative algebra that is not graded-commutative. Neklyudova's theorem does not directly apply here. \par
 In section \ref{sec:pre} we define Hopf $\mathbb{Z}_2^2$-algebras and bicovariant differential calculi on them.  There are no truly new results in this section.  Indeed, the earliest reference we are aware of to the notion of a $G$-graded (here $G$ is an abelian group) or coloured Hopf algebra is \cite[Definition 10.5.11]{mon93}. Moving on to section  \ref{sec:QuantDoubSupPlane}, we present the double-graded quantum superplane $\mathbb{R}_q(1|1,1,1) =: \mathbb{R}_q(1|\mathbf{1}) $ as a quantisation of the $\mathbb{Z}_2^2$-plane with a single parameter, which we denote as $q$. We explore the $\mathbb{Z}_2^2$-bialgebra structure on such `spaces'. The   Hopf $\mathbb{Z}_2^2$-algebra structure on an extended version is also given. We explicitly construct a bicovariant differential calculus on $\mathbb{R}_q(1|\mathbf{1})$. Moreover, we deduce all the required commutation relations between the generators of the algebra, their differentials and their partial derivatives. The resulting structures closely resemble two copies of Manin's quantum superplane $\mathbb{R}_q(1|1)$ \cite{man89}, but with subtle interesting differences due to our underlying $\mathbb{Z}_2^2$-grading.  This needs to be kept in mind in order to understand the appearance of various signs in the commutation relations, as well as when dealing with the tensor product - we will always use the $\mathbb{Z}_2^2$-graded tensor product.  We will present all relevant calculations explicitly for clarity and accessibility. We end in section \ref{sec:ConRem} with a few concluding remarks.

\smallskip

\noindent \textbf{Conventions and Notation}: We work over the field $\mathbb{C}$  and we set $\mathbb{Z}_2^n := \mathbb{Z}_2 \times \mathbb{Z}_2 \times \cdots \times \mathbb{Z}_2$ ($n$-times). In particular, $\mathbb{Z}_2^2 := \mathbb{Z}_2 \times \mathbb{Z}_2$. We fix the order of elements in $\mathbb{Z}_2^2$  \emph{lexicographically}, i.e.,
$$\mathbb{Z}_{2}^{2} = \{ (0,0),  \: (0,1), \: (1,0), \: (1,1)\}   .$$
Note that other choices of ordering have appeared in the literature. We will denote elements of $\mathbb{Z}_2^2$ by $\gamma_i$, understanding that $i = 0,1,2,3$ using the above fixed ordering.  The abelian group $\mathbb{Z}_2^2$ comes with a canonical scalar product that we will denote as $\langle -, - \rangle$. In particular, setting $\gamma_i = (a,b)$ and $\gamma_j = (a',b')$, we have $\langle \gamma_i , \gamma_j \rangle = a a' + b b'$. The generalisation to $\mathbb{Z}_2^n$ $(n>2)$ is clear.

\section{Preliminaries}\label{sec:pre}
\subsection{Hopf $\mathbb{Z}_2^2$-algebras}
Standard references for Hopf algebras and their application in noncommutative geometry include \cite{cha/pre95,gra/var/fig01,mad99, man91}. The notion of  ``coloured Hopf algebras'' is not well-known but has appeared in the literature over the years, see for example \cite{and/ang/bag14,fel01,hur/mak17,mon93,wang11}. Rather than define quite general structures, we will work with the specific example of coloured Hopf algebras that have an underlying $\mathbb{Z}_2^2$-graded structure. The generalisation to $\mathbb{Z}_2^n$-graded structure can be made verbatim only making minimal changes.
\begin{definition}{Definition}
A \emph{Hopf $\mathbb{Z}_2^2$-algebra} is a Hopf algebra in the category of $\mathbb{Z}_2^2$-graded vector spaces.
\end{definition}
While the above definition is complete, we will spell-out the structure of a Hopf $\mathbb{Z}_2^2$-algebra piece-by-piece for clarity.  Note that the tensor product of $\mathbb{Z}_2^2$-algebras is the $\mathbb{Z}_2^2$-graded tensor product, i.e.,
$$(a\otimes b)(c \otimes d) = (-1)^{\langle \textnormal{deg}(b),  \textnormal{deg}(c) \rangle }     ac \otimes bd.$$
\begin{definition}{Definition}
A \emph{$\mathbb{Z}_2^2$-algebra} is a triple $(\mathcal{A} , \mu , \eta)$, where $\mathcal{A} =  \underset{\gamma_i \in \mathbb{Z}_2^2}{\oplus} \mathcal{A}_{\gamma_i}$ is a $\mathbb{Z}_2^2$-graded vector space, $\mu : \mathcal{A} \otimes_{\mathbb{C}} \mathcal{A} \rightarrow \mathcal{A}$ (\emph{multiplication}) and $\eta : \mathbb{C} \rightarrow  \mathcal{A}$ (\emph{unit}) are two (grading preserving) $\mathbb{Z}_2^2$-graded space morphisms  that satisfy
\begin{align}
& \mu \circ (\mu \otimes \textnormal{Id}_{\mathcal{A}}) = \mu \circ ( \textnormal{Id}_{\mathcal{A}} \otimes \mu) && \textnormal{(Associativity)}\\
& \mu \circ (\eta \otimes \textnormal{Id}_{\mathcal{A}}) = \mu \circ( \eta\otimes \textnormal{Id}_{\mathcal{A}})= \textnormal{Id}_{\mathcal{A}} && \textnormal{(Unity)}
\end{align}
 where we  have used the natural isomorphisms $\mathbb{C}\otimes \mathcal{C} \cong \mathcal{C} \cong \mathcal{C} \otimes \mathbb{C}$.  A map $\phi : \mathcal{A} \rightarrow \mathcal{B}$ is a \emph{$\mathbb{Z}_2^2$-algebra morphism} if it is a (grading preserving) $\mathbb{Z}_2^2$-graded space morphism that satisfies
\begin{align}
\phi \circ \mu_{\mathcal{A}} = \mu_{\mathcal{B}} \circ \phi \otimes \phi, &&\textnormal{and} && \phi \circ \eta_{\mathcal{A} } = \eta_{\mathcal{B}}.
\end{align}
\end{definition}
\begin{definition}{Definition}\label{def:coalgebra}
A \emph{$\mathbb{Z}_2^2$-coalgebra} is a triple $(\mathcal{C}, \Delta, \epsilon)$, where $\mathcal{C}$ is a $\mathbb{Z}_2^2$-graded vector space, $\Delta : \mathcal{C} \rightarrow \mathcal{C} \otimes \mathcal{C}$ (coproduct) and $\epsilon : \mathcal{C} \rightarrow \mathbb{C}$ (counit) are two (grading preserving) $\mathbb{Z}_2^2$-graded space morphisms  that satisfy
\begin{align}
& (\Delta \otimes \textnormal{Id}_{\mathcal{C}}) \circ \Delta = (\textnormal{Id}_{\mathcal{C}} \otimes \Delta)\circ \Delta  && \textnormal{(Coassociativity)}\\
& (\epsilon \otimes \textnormal{Id}_{\mathcal{C}} )\circ \Delta = (\textnormal{Id}_{\mathcal{C}} \otimes \epsilon) \circ \Delta = \textnormal{Id}_{\mathcal C}&& \textnormal{(Counity)}
\end{align}
where we  have used the natural isomorphisms $\mathbb{C}\otimes \mathcal{C} \cong \mathcal{C} \cong \mathcal{C} \otimes \mathbb{C}$ in the last equality of counity condition. A map $\phi : \mathcal{C} \rightarrow  \mathcal{D}$ is a \emph{$\mathbb{Z}_2^2$-coalgebra morphism} if it is a (grading preserving) $\mathbb{Z}_2^2$-graded space morphism that satisfies
\begin{align}
\phi \otimes \phi \circ \Delta_{\mathcal{C}} = \Delta_{\mathcal{D}} \circ \phi , &&\textnormal{and} && \epsilon_{\mathcal{D} } \circ \phi = \epsilon_{\mathcal{C}}.
\end{align}
\end{definition}
\begin{definition}{Definition}\label{def:bialgebra}
A \emph{$\mathbb{Z}_2^2$-bialgebra} is a tuple $(\mathcal{A}, \mu, \eta, \Delta, \epsilon)$ where $(\mathcal{A}, \mu, \eta)$ is a $\mathbb{Z}_2^2$-algebra and $(\mathcal{A}, \Delta, \epsilon)$ is a $\mathbb{Z}_2^2$-coalgebra such that the following equivalent compatibility conditions hold
\begin{enumerate}
\item $\Delta :  \mathcal{A} \rightarrow \mathcal{A} \otimes \mathcal{A}$ and $\epsilon : \mathcal{A} \rightarrow \mathbb{C}$ are $\mathbb{Z}_2^2$-algebras morphisms,
\item $\mu : \mathcal{A} \times \mathcal{A} \rightarrow \mathcal{A}$ and $\eta : \mathbb{C} \rightarrow \mathcal{A}$ are $\mathbb{Z}_2^2$-coalgebra morphisms.
 \end{enumerate}
 A \emph{morphism of $\mathbb{Z}_2^2$-bialgebras} is a (grading preserving) $\mathbb{Z}_2^2$-graded space morphism that is both a morphism of $\mathbb{Z}_2^2$-algebras and $\mathbb{Z}_2^2$-coalgebras.
\end{definition}

\begin{proposition}{Proposition}\label{prop:Z22counit}
Let $(\mathcal{A}, \mu, \eta, \Delta, \epsilon)$  be a $\mathbb{Z}_2^2$-bialgebra with unit element $\eta(1) = 1 \ni \mathcal{A}_{\gamma_0}$. Then
$\epsilon(\mathcal{A}_{\gamma_i}) =0 $ for all $1 \leq i \leq 3$, and $\epsilon(1) = 1$.
\end{proposition}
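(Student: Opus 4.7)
The plan is to exploit two facts: first, that $\mathbb{C}$ carries the trivial $\mathbb{Z}_2^2$-grading (concentrated in degree $\gamma_0$), and second, that every structure morphism in sight is required to be grading preserving.

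For the first claim, I would note that $\epsilon : \mathcal{A} \to \mathbb{C}$ is, by Definition \ref{def:coalgebra}, a $\mathbb{Z}_2^2$-graded space morphism of degree $\gamma_0$. Since $\mathbb{C}$ has only its $\gamma_0$-component nonzero (i.e.\ $\mathbb{C}_{\gamma_i} = 0$ for $i \neq 0$), any homogeneous element $a \in \mathcal{A}_{\gamma_i}$ with $i \neq 0$ must map to $\mathbb{C}_{\gamma_i} = 0$. Hence $\epsilon(\mathcal{A}_{\gamma_i}) = 0$ for $1 \leq i \leq 3$. This is the easy, essentially formal, part.

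For the second claim, I would use the bialgebra compatibility from Definition \ref{def:bialgebra}: in particular, that $\Delta$ is a $\mathbb{Z}_2^2$-algebra morphism, so $\Delta(1) = 1 \otimes 1$. Applying the counity axiom to the unit,
\begin{equation*}
1 = \textnormal{Id}_{\mathcal{A}}(1) = (\epsilon \otimes \textnormal{Id}_{\mathcal{A}}) \circ \Delta(1) = (\epsilon \otimes \textnormal{Id}_{\mathcal{A}})(1 \otimes 1) = \epsilon(1) \cdot 1,
\end{equation*}
where I am using the natural identification $\mathbb{C} \otimes \mathcal{A} \cong \mathcal{A}$. It follows that $\epsilon(1) = 1$.

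There is no real obstacle here; the argument is entirely formal and follows the classical (ungraded) Hopf algebra proof verbatim. The only thing to be careful about is that the degree-preserving nature of $\epsilon$ together with $1 \in \mathcal{A}_{\gamma_0}$ is what makes the identification of $\epsilon(1)$ with a scalar in $\mathbb{C} = \mathbb{C}_{\gamma_0}$ consistent with the grading. No sign factor from the braiding $(-1)^{\langle -, - \rangle}$ enters because every tensor factor appearing is of degree $\gamma_0$.
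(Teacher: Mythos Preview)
Your proof is correct and follows essentially the same approach as the paper: the vanishing of $\epsilon$ on the non-trivial graded components comes from the degree-preserving nature of $\epsilon$, and $\epsilon(1)=1$ is a formal consequence of the bialgebra compatibility. The only cosmetic difference is that the paper deduces $\epsilon(1)=1$ directly from $\epsilon$ being a unital algebra morphism, whereas you route it through $\Delta(1)=1\otimes 1$ and counity---both are one-line consequences of Definition~\ref{def:bialgebra}.
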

\begin{proof}
As $\epsilon : \mathcal{A} \rightarrow \mathbb{C}$ is a grading preserving map it is clear that $\epsilon(\mathcal{A}_{\gamma_i}) =0$ with the exception of $i = 0$, i.e.,  $\epsilon(\mathcal{A}_{(0,0)})$  cannot be zero as $\epsilon$ is required to be a morphism of (unital) algebras, thus the unit in $\mathcal{A}$ must be sent to the unit in $\mathbb{C}$, i.e., the number $1$.
\end{proof}
\begin{definition}{Definition}
Let $\mathcal{A}$ be a $\mathbb{Z}_2^2$-bialgebra. Then $a \in \mathcal{A}_{(0,0)}$ is said to be a \emph{group-like element} if $\Delta(a) = a \otimes a $. An element $b \in \mathcal{A}$ is said to be a \emph{primitive element}  if  $\Delta(b) = b \otimes 1 + 1 \otimes b$.
\end{definition}
\begin{proposition}{Proposition}
The set of primitive elements of a $\mathbb{Z}_2^2$-bialgebra form a $\mathbb{Z}_2^2$-Lie algebra under the $\mathbb{Z}_2^2$-graded commutator.
\end{proposition}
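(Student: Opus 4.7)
The plan is to show two things: (i) the $\mathbb{Z}_2^2$-graded commutator of two homogeneous primitive elements is again primitive, and (ii) this bracket automatically satisfies the $\mathbb{Z}_2^2$-graded antisymmetry and $\mathbb{Z}_2^2$-graded Jacobi identity that characterise a $\mathbb{Z}_2^2$-Lie algebra. The second point is standard for the graded commutator in any $\mathbb{Z}_2^2$-associative algebra (it follows formally from associativity and the bicharacter property of the sign factor $(-1)^{\langle -,- \rangle}$), so the substance of the proof lies in (i).

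For (i), let $a$ and $b$ be homogeneous primitive elements and set $\sigma := (-1)^{\langle \textnormal{deg}(a), \textnormal{deg}(b)\rangle}$, so $\sigma^2 = 1$. Because $\Delta$ is a morphism of $\mathbb{Z}_2^2$-algebras (Definition \ref{def:bialgebra}), I will compute $\Delta(ab) = \Delta(a)\Delta(b)$ in the $\mathbb{Z}_2^2$-graded tensor product. Expanding
\[
\Delta(a)\Delta(b) = (a\otimes 1 + 1\otimes a)(b\otimes 1 + 1\otimes b)
\]
using the graded multiplication rule $(x\otimes y)(z\otimes w) = (-1)^{\langle \textnormal{deg}(y),\textnormal{deg}(z)\rangle}\,xz\otimes yw$, the only non-trivial sign arises in the cross term $(1\otimes a)(b\otimes 1) = \sigma\, b\otimes a$. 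Thus
\[
\Delta(ab) = ab\otimes 1 + a\otimes b + \sigma\, b\otimes a + 1\otimes ab,
\]
and by symmetry (swapping $a \leftrightarrow b$),
\[
\Delta(ba) = ba\otimes 1 + b\otimes a + \sigma\, a\otimes b + 1\otimes ba.
\]

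Forming $\Delta([a,b]) = \Delta(ab) - \sigma\,\Delta(ba)$ and using $\sigma^2 = 1$, the mixed terms $a\otimes b$ and $b\otimes a$ cancel in pairs, leaving
\[
\Delta([a,b]) = (ab - \sigma ba)\otimes 1 + 1\otimes(ab - \sigma ba) = [a,b]\otimes 1 + 1\otimes [a,b],
\]
so $[a,b]$ is primitive. Linearity extends this conclusion to all (not necessarily homogeneous) primitive elements, showing the primitives form a $\mathbb{Z}_2^2$-graded subspace closed under the graded bracket. Finally, I would note that $[a,b] = -\sigma\,[b,a]$ is immediate from the definition of the graded commutator, and the graded Jacobi identity follows by the standard manipulation of associators with the bicharacter signs.

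The only delicate point — and the step I would write out most carefully — is bookkeeping of the sign $\sigma$ in the two cross terms of $\Delta(a)\Delta(b)$ and $\Delta(b)\Delta(a)$; the cancellation relies precisely on $\sigma^2 = 1$, which in turn uses that $\langle -, - \rangle$ takes values in $\mathbb{Z}_2$. Everything else is formal.
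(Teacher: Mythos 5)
Your proof is correct and follows essentially the same route as the paper: expand $\Delta(a)\Delta(b)$ and $\Delta(b)\Delta(a)$ in the $\mathbb{Z}_2^2$-graded tensor product, observe that the cross terms cancel because $\sigma^2=1$, and conclude that $[a,b]$ is again primitive. Your additional remark that graded antisymmetry and the graded Jacobi identity hold formally for the graded commutator is a sensible (and slightly more complete) supplement, since the paper's proof only verifies closure under the bracket.
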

\begin{proof}
As the coproduct is a linear map, it is sufficient to consider homogeneous primitive elements and show that they are closed under the $\mathbb{Z}_2^2$-graded commutator. A direct calculation shows that
\begin{align}
\nonumber \Delta([a,b])  & = \Delta(ab - (-1)^{\langle \textnormal{deg}(a), \textnormal{deg}(b)\rangle }    ba )\\
 \nonumber & = \big (a\otimes 1 + 1 \otimes a   \big)\big( b \otimes 1 + 1 \otimes b\big) - (-1)^{\langle \textnormal{deg}(a) , \textnormal{deg}(b)\rangle }     \big( b \otimes 1 + 1 \otimes b\big)(a\otimes 1 + 1 \otimes a   \big)\\
 \nonumber & = ab \otimes 1 + a \otimes b + (-1)^{\langle \textnormal{deg}(a), \textnormal{deg}(b)\rangle } b \otimes a + 1 \otimes ab\\
 \nonumber & - (-1)^{\langle \textnormal{deg}(a), \textnormal{deg}(b)\rangle } \big( ba \otimes 1 + b \otimes a + (-1)^{\langle \textnormal{deg}(a), \textnormal{deg}(b)\rangle }    a \otimes b + 1 \otimes ba  \big)\\
&= [a,b]\otimes 1 + 1\otimes [a,b].
\end{align}
Thus, the set of primitive elements is closed under the $\mathbb{Z}_2^2$-graded commutator.
\end{proof}

\begin{definition}{Definition}\label{def:HopfAlg}
A \emph{Hopf $\mathbb{Z}_2^2$-algebra} is a $\mathbb{Z}_2^2$-bialgebra admitting an antipode, that is a  $\mathbb{Z}_2^2$-algebra antihomomorphism  $\mathcal{S} : \mathcal{A} \rightarrow \mathcal{A}$, such that $\mathcal{S}(ab) = (-1)^{\langle \textnormal{deg}(a), \textnormal{deg}(b) \rangle} \mathcal{S}(b)\mathcal{S}(a)$, that satisfies
$$\mu \circ (\mathcal{S} \otimes \textnormal{Id}_\mathcal{A})\circ \Delta = \mu \circ (\textnormal{Id}_\mathcal{A} \otimes \mathcal{S}) \circ \Delta = \eta \circ \epsilon   .$$
A Hopf $\mathbb{Z}_2^2$-algebra is thus a tuple $(\mathcal{A}, \mu, \eta, \Delta, \epsilon, \mathcal{S})$.
\end{definition}
In practice and  where no confusion can arise, we will denote a  Hopf $\mathbb{Z}_2^2$-algebra  $(\mathcal{A}, \mu, \eta, \Delta, \epsilon, \mathcal{S})$ simply as $\mathcal{A}$, understanding all required structure maps as being implied.

Let us denote the interchange map as  $\sigma : \mathcal{A} \otimes \mathcal{A} \rightarrow \mathcal{A} \otimes \mathcal{A} $, which is defined as $\sigma(a \otimes b) = (-1)^{\langle \textnormal{deg}(a), \textnormal{deg}(b)\rangle }    b \otimes a$.
\begin{definition}{Definition}\label{def:commcocomm}
A Hopf $\mathbb{Z}_2^2$-algebra  $\mathcal{A}$ is said to be \emph{commutative} if it is $\mathbb{Z}_2^2$-commutative as an algebra, i.e., $\mu\circ \sigma = \mu$. Similarly, a Hopf $\mathbb{Z}_2^2$-algebra is said to be \emph{cocommutative} if it is $\mathbb{Z}_2^2$-cocommutative as  a coalgebra, i.e. $\sigma \circ \Delta = \Delta$.
\end{definition}
\begin{definition}{Definition}\label{def:involutive}
A Hopf $\mathbb{Z}_2^2$-algebra $\mathcal{A}$ is said to be \emph{involutive} if the antipode satisfies $\mathcal{S}^{2} =  \textnormal{Id}_{\mathcal{A}}$
\end{definition}

\subsection{Bicovariant differential calculus}
 In the following, we will use  the canonical embedding $\mathbb{Z}_2^2 \hookrightarrow \mathbb{N} \times \mathbb{Z}_2^2$ given by $(\gamma_1, \gamma_2) \mapsto (0, \gamma_1, \gamma_2)$. Note that a $\mathbb{N} \times \mathbb{Z}_2^2$-grading descends to a natural  $\mathbb{Z}_2^3$-grading. We will use this fact and employ the $\mathbb{Z}_2^3$-graded tensor product.
\begin{definition}{Definition}\label{def:DiffCal}
Let $\mathcal{A}$ be a Hopf $\mathbb{Z}_2^2$-algebra and let $\Omega^p(\mathcal{A})$ be the $\mathcal{A}$-bimodule of $p$-forms. A \emph{higher-order differential calculus} on $\mathcal{A}$ is the $\mathbb{N} \times \mathbb{Z}_2^2$-graded algebra $\Omega(\mathcal{A}) = \oplus_{p = 0}^{\infty} \Omega^p(\mathcal{A})$ such that $\Omega_{(0, *)}(\mathcal{A}) = \Omega^{0}(\mathcal{A}) \cong \mathcal{A}$, and $\Omega_{(p,*)}(\mathcal{A}) = \Omega^p(\mathcal{A})$,  together with a linear map, the \emph{de Rham differential}, $\mathrm{d}: \Omega^p(\mathcal{A}) \rightarrow\Omega^{p+1}(\mathcal{A})$ of $\mathbb{N} \times \mathbb{Z}_2^2$-degree $(1,0,0)$ that satisfies
\begin{enumerate}
\item $\mathrm{d}^2 =0$,
\item $\mathrm{d}(\alpha \beta) = (\mathrm{d}\alpha) \beta + (-1)^p\alpha \mathrm{d} \beta$,\\ where $\alpha \in \Omega^p(\mathcal{A})$ and $\beta \in \Omega(\mathcal{A})$, and,
\item $\Omega(\mathcal{A})$ is generated by $\mathcal{A}$ and $\Omega^1(\mathcal{A}) := \textrm{Span}\big \{ a     \mathrm{d}b \big\}$, where $a$ and $b \in \mathcal{A}$.
\end{enumerate}
\end{definition}
\begin{remark}{Remark}
The notion of a differential calculi on a quantum group can be traced back to Woronowicz \cite{woro89}. The above definition with regards to the grading is very similar to the conventions of Deligne \cite{del/fre99} for differential forms on supermanifolds  which naturally come with a $\mathbb{N} \times \mathbb{Z}_2$ grading, but form a $\mathbb{Z}_2 \times \mathbb{Z}_2$-commutative algebra. We also note for some generalisations:  graded differential algebras with $d^N=0$ have been developed by  Kapranov \cite{kap}, Dubois-Violette \cite{dub},   Abramov \cite{Abr}, and  Abramov and Kerner \cite{Abr/Ker}; for bicovariant differential and codifferential calculi on finite groups, see \cite{dup2018c}; a  q-deformed  differential calculus  on  the light-cone was given by Akulov,  Duplij and Chitov \cite{aku/dup/chi}, and it allowed the construction of q-twistors and so a q-deformed differential calculi of q-tensors of any rank.
\end{remark}
\begin{definition}{Definition}\label{def:lrcoaction}
Let $\mathcal{A}$ be a Hopf $\mathbb{Z}_2^2$-algebra and let $(\Omega(\mathcal{A}), \mathrm{d})$ be a differential calculus over $\mathcal{A}$. Then $(\Omega(\mathcal{A}), \mathrm{d})$ is said to be
\renewcommand{\theenumi}{\roman{enumi}}
\begin{enumerate}
\item \emph{left-covariant} if there exists a linear map $\Delta_L : \Omega(\mathcal{A}) \rightarrow \mathcal{A}\otimes \Omega(\mathcal{A})$, called a \emph{left coaction}, such that
$$\Delta_L(a \mathrm{d}b) = \Delta(a) (\textnormal{Id}_\mathcal{A} \otimes \mathrm{d})\Delta(b),$$
for all $a,b \in \mathcal{A}$.
\item  \emph{right-covariant} if there exists a linear map $\Delta_R : \Omega(\mathcal{A}) \rightarrow  \Omega(\mathcal{A})\otimes \mathcal{A}$, called a \emph{right coaction}, such that
$$\Delta_R(a \mathrm{d}b) = \Delta(a) ( \mathrm{d} \otimes \textnormal{Id}_\mathcal{A})\Delta(b),$$
for all $a,b \in \mathcal{A}$.
\end{enumerate}
Furthermore, a left-covariant and right-covariant differential calculus $(\Omega(\mathcal{A}), \mathrm{d})$ is said to be \emph{bicovariant}.
\end{definition}
\begin{remark}{Remark}
In the current context, we have a similar result to Woronowicz  \cite[Proposition 1.4]{woro89}. In particular, if $(\Omega(\mathcal{A}), \mathrm{d})$ is a bicovariant differential calculus over $\mathcal{A}$. Then
$$\big(\Delta_L \otimes \textnormal{Id}_\mathcal{A} \big)\circ \Delta_R =  \big(\textnormal{Id}_\mathcal{A}\otimes \Delta_R)\circ \Delta_L   .$$
We will not need this result in this paper and so omit the proof.
\end{remark}
The bicovariance  can be restated as the following conditions:
\begin{align}
 \nonumber & \Delta_L(a     \mathrm{d} a + \mathrm{d} b   b ) = \Delta(a)\Delta_L(\mathrm{d}a) + \Delta_L(\mathrm{d}b)\Delta(b),\\
 & \Delta_R(a     \mathrm{d} a + \mathrm{d} b   b ) = \Delta(a)\Delta_R(\mathrm{d}a) + \Delta_R(\mathrm{d}b)\Delta(b).
\end{align}
\begin{remark}{Remark}\label{rem:bicovariance}
It is clear that we do not actually need a Hopf algebra structure to define left-covariance or right-covariance, but rather just the structure of a bialgebra. That is, the antipode plays no r\^{o}le here.
\end{remark}

\section{The double-graded quantum superplane $\mathbb{R}_q(1|\mathbf{1})$}\label{sec:QuantDoubSupPlane}
\subsection{The double-graded quantum superplane}
Consider the algebra of polynomials with $\mathbb{Z}_2^2$-graded generators
\begin{equation}
\big( \underbrace{x}_{(0,0)},~ \underbrace{\xi}_{(0,1)},~ \underbrace{\theta}_{(1,0)},~ \underbrace{z}_{(1,1)}\big),
\end{equation}
subject to the relations:
\begin{subequations}
\begin{align*}
 & x \xi - q     \xi x = 0, \Label{Eqn:comrel1}
&& x \theta - q     \theta x = 0, \Label{Eqn:comrel2}\\
 & x z -   z x =0,  \Label{Eqn:comrel3}
&& \xi^2 =0,  \Label{Eqn:comrel4}\\
& \theta^2 = 0, \Label{Eqn:comrel5}
&& \xi \theta - \theta \xi =0, \Label{Eqn:comrel6}\\
& \xi z + q^{-1}     z \xi = 0, \Label{Eqn:comrel7}
&& \theta z + q^{-1}     z \theta = 0, \Label{Eqn:comrel8}
\end{align*}
\end{subequations}
where $q \in \mathbb{C}_*$ and is \emph{not} a root of unity.  Note that setting  $q =1$ reduces the relations to $\mathbb{Z}_2^2$-commutativity (see for example \cite{Cov/Gra/Pon1}).
\begin{definition}{Definition}
The $\mathbb{Z}_2^2$-graded, associative, unital algebra
$$\mathcal{A}_q(x, \xi, \theta, z) := \mathbb{C}[x,\xi, \theta, z] / \mathbf{J},$$
where $\mathbf{J}$ is the ideal generated by the relations \eqref{Eqn:comrel1} to \eqref{Eqn:comrel8}  is the \emph{algebra of polynomials on the   double-graded quantum superplane} $\mathbb{R}_q(1|\mathbf{1})$.
\end{definition}
The relations \eqref{Eqn:comrel1} to \eqref{Eqn:comrel8} should, of course, be compared  with the relations that define Manin's superplane $\mathbb{R}_q(1|2)$  (see \cite{man89}).  Manin considers the generators $\{x', \xi', \theta'   \}$ of $\mathbb{Z}_2$-degree $0$, $1$ and $1$, respectively, all subject to the following relations:
\begin{subequations}
\begin{align}
 & x' \xi' - q     \xi' x' = 0, \label{Eqn:Mancomrel1} \\
& x' \theta' - q     \theta' x' = 0, \label{Eqn:Mancomrel2}\\
& \xi' \theta' + q^{-1}    \theta' \xi' =0, \label{Eqn:Mancomrel3}\\
& \xi'^2 =0,  \label{Eqn:Mancomrel4}\\
& \theta'^2 = 0. \label{Eqn:Mancomrel5}
\end{align}
\end{subequations}
 In particular, notice that \eqref{Eqn:comrel4} and \eqref{Eqn:comrel5}  show that $\xi$ and $\theta$ are nilpotent, but \eqref{Eqn:comrel6} means that they mutually commute rather than anticommute -- this is, neglecting the factor of $q^{-1}$, the opposite of \eqref{Eqn:Mancomrel3}. That is, they are `self-fermions' but are `relative-bosons'. Moreover, $z$ is not nilpotent, however, it satisfies a twisted anticommutation relation with both $\xi$ and $\theta$, see \eqref{Eqn:comrel7} and \eqref{Eqn:comrel8}, and compare with \eqref{Eqn:Mancomrel3}. Thus, $z$ is a `self-boson' but is a `relative-fermion' with respect to $\xi$ and $\theta$.  The language here is borrowed from the theory of Green--Volkov parastatistics (see \cite{Gre,Vol}). Many of the following constructions and mathematical results will closely parallel than of Manin's superplane, but with subtle sign differences due to the novel $\mathbb{Z}_2^2$-grading we employ.\par
For brevity, we will set $\mathcal{A}_q := \mathcal{A}_q(x, \xi, \theta, z)$.  Let $\mathcal{A}_{q,k}$  $(k \in \mathbb{N})$  be the homogeneous component of $\mathcal{A}_q$ spanned by monomials of the form
\begin{equation}\label{Eqn:PBWbasis}
x^m \xi^{\alpha} \theta^{\beta} z^n   ,
\end{equation}
i.e., we use a PBW-like basis, where $m+\alpha + \beta + n =k$. Note that $m,n \in \mathbb{N}$, while due to the nilpotent nature of $\xi$ and $\theta$, $\alpha, \beta \in \{0,1\}$.

\subsection{The $\mathbb{Z}_2^2$-bialgebra structure on the double-graded quantum superplane}
As well as a $\mathbb{Z}_2^2$-algebra structure, we naturally have a  $\mathbb{Z}_2^2$-bialgebra structure on the double-graded quantum superplane.
\begin{proposition}{Proposition}\label{prop:bialgq}
The following coproduct and counit provide $\mathcal{A}_q(x, \xi, \theta, z)$ with the structure of a $\mathbb{Z}_2^2$-bialgebra (see Definition \ref{def:coalgebra} and Proposition \ref{prop:Z22counit}):
\begin{align}
& \Delta(x) = x \otimes x, \\
& \Delta(\xi) = x \otimes \xi + \xi \otimes x,\\
& \Delta(\theta) = x \otimes \theta + \theta \otimes x , \\
& \Delta(z) = x \otimes z + z \otimes x, \\
& \epsilon(x) = 1, \\
& \epsilon(\xi) = \epsilon(\theta) = \epsilon(z) = 0.
\end{align}
\end{proposition}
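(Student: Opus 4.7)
The plan is to define $\Delta$ and $\epsilon$ as $\mathbb{Z}_2^2$-graded algebra morphisms out of the free unital $\mathbb{Z}_2^2$-graded algebra on $\{x,\xi,\theta,z\}$ by prescribing them on generators as above, then to show that they descend to the quotient $\mathcal{A}_q$ by sending the defining ideal $\mathbf{J}$ to zero. If this works, condition (i) of Definition \ref{def:bialgebra} is built in by construction, and only coassociativity, counity, and grading preservation remain to be verified. By linearity and the algebra-morphism property, each of these reduces to a check on the four generators.

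Grading preservation is immediate: every summand in $\Delta(u)$ carries the same $\mathbb{Z}_2^2$-degree as $u$ (both $x\otimes \xi$ and $\xi\otimes x$ have degree $(0,0)+(0,1)=(0,1)$, matching $\textnormal{deg}(\xi)$, and similarly for $\theta$ and $z$), while $\epsilon$ sends every generator of nonzero degree to $0$ in agreement with Proposition \ref{prop:Z22counit}. Coassociativity is equally direct: for $x$ both sides of the coassociativity equation equal $x\otimes x\otimes x$, while for each of $\xi,\theta,z$ both sides yield the symmetric three-term expression $u\otimes x\otimes x + x\otimes u\otimes x + x\otimes x\otimes u$. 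The counit conditions on generators reduce at once to $\epsilon(x)=1$ and $\epsilon(\xi)=\epsilon(\theta)=\epsilon(z)=0$.

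The real content lies in verifying that the eight relations \eqref{Eqn:comrel1}--\eqref{Eqn:comrel8} are sent to zero in $\mathcal{A}_q\otimes\mathcal{A}_q$ under $\Delta$. This requires systematic use of the $\mathbb{Z}_2^2$-graded product rule
\[
(a\otimes b)(c\otimes d) \;=\; (-1)^{\langle \textnormal{deg}(b),\, \textnormal{deg}(c)\rangle}\, ac\otimes bd.
\]
The relations pairing $x$ with one other generator are routine, since every braiding sign that arises equals $+1$. The nilpotency identities $\Delta(\xi)^2=\Delta(\theta)^2=0$ collapse after using $\xi^2=\theta^2=0$ together with \eqref{Eqn:comrel1}--\eqref{Eqn:comrel2}. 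The delicate cases are the mixed relations \eqref{Eqn:comrel6}--\eqref{Eqn:comrel8}, where the non-trivial braiding signs
\[
(-1)^{\langle (0,1),(1,0)\rangle} = +1, \qquad (-1)^{\langle (0,1),(1,1)\rangle} = -1, \qquad (-1)^{\langle (1,0),(1,1)\rangle} = -1
\]
must combine with the scalars $q,q^{-1}$ in exactly the right way. Expanding, for example, $\Delta(\xi)\Delta(z)+q^{-1}\Delta(z)\Delta(\xi)$ produces eight cross-terms; the sign $(-1)^{\langle(0,1),(1,1)\rangle}=-1$ together with $x\xi=q\xi x$ reorganise them into multiples of $\xi z+q^{-1}z\xi$ and $xz-zx$, both of which vanish in $\mathcal{A}_q$. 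The case $\Delta(\theta)\Delta(z)+q^{-1}\Delta(z)\Delta(\theta)$ follows by the obvious $\xi\leftrightarrow\theta$ symmetry, and $\Delta(\xi)\Delta(\theta)-\Delta(\theta)\Delta(\xi)$ is analogous but simpler as the relevant braiding sign is $+1$ throughout. This sign bookkeeping for the $z$-relations is the only substantive obstacle; once it is resolved the proposition follows.
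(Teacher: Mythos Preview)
Your proposal is correct and follows essentially the same route as the paper: verify coassociativity and counity on generators, then check that $\Delta$ respects each of the defining relations \eqref{Eqn:comrel1}--\eqref{Eqn:comrel8} (the paper phrases this as ``$\Delta$ is an algebra morphism'' and writes out parts (vii)--(xiv), which is exactly your ``$\Delta$ kills $\mathbf{J}$''). Your identification of the $z$-relations \eqref{Eqn:comrel7}--\eqref{Eqn:comrel8} as the only places where the $\mathbb{Z}_2^2$-braiding sign is nontrivial is spot on and matches the paper's computation in part (xiii).
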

\begin{proof}
We need to show that the above defined coproduct and counit do indeed define a $\mathbb{Z}_2^2$-coalgabra (see Definition \ref{def:coalgebra}). It is sufficient to check these conditions on each generator separately.  First, we check the counit condition:
\renewcommand{\theenumi}{\roman{enumi}}
\begin{enumerate}
 \setlength\itemsep{1em}
\item $(\epsilon \otimes \textnormal{Id})\Delta(x)= \epsilon(x)\otimes x = 1 \otimes x \simeq x$,\\
$(\textnormal{Id}\otimes \epsilon)\Delta(x)=  x \otimes \epsilon(x) =  x  \otimes 1 \simeq x$.
\item $(\epsilon \otimes \textnormal{Id})\Delta(\xi)= \epsilon(x)\otimes \xi = 1 \otimes \xi \simeq \xi$,\\
$(\textnormal{Id}\otimes \epsilon)\Delta(\xi)=  \xi \otimes \epsilon(x) =  \xi  \otimes 1 \simeq \xi$.
\item The same calculation as in part (ii) shows that the counit condition holds for $\theta$ and $z$.
\end{enumerate}
Secondly, we check the coassociativity:
\begin{enumerate}
\setlength\itemsep{1em}
\setcounter{enumi}{3}
\item $(\Delta \otimes \textnormal{Id})\Delta(x)=  x\otimes x \otimes x$,\\
$(\textnormal{Id}\otimes \Delta)\Delta(x)=  x \otimes x \otimes x$.
\item $(\Delta \otimes \textnormal{Id})\Delta(\xi)= (\Delta \otimes \textnormal{Id})(x\otimes \xi + \xi \otimes x) = x \otimes x \otimes \xi +  x \otimes \xi \otimes x   +\xi \otimes x \otimes x $,\\
$(\textnormal{Id}\otimes \Delta)\Delta(\xi)= (\textnormal{Id}\otimes \Delta)(x\otimes \xi + \xi \otimes x) =  x \otimes x \otimes \xi  +  x \otimes \xi \otimes x + \xi \otimes x \otimes x $.
\item The same calculation as in part (v) shows that the coassociativity condition holds for $\theta$ and $z$.
\end{enumerate}
 Note that we have a cocommutative $\mathbb{Z}_2^2$-coalgabra (see Definition \ref{def:commcocomm}).\par
Thirdly, we check that the algebra and coalgebra structure are compatible. We do this by showing that the coproduct is an algebra morphism (see Definition \ref{def:bialgebra}). This requires direct calculations:
\begin{enumerate}
\setlength\itemsep{1em}
\setcounter{enumi}{6}
\item \begin{align*} \Delta(x)\Delta(\xi)  &=   (x\otimes x)(x \otimes \xi + \xi \otimes x) \\
        &= x^2  \otimes x \xi + x \xi \otimes x^2\\
         &=q ( x^2 \otimes \xi x + \xi x \otimes  x^2)\\
        &= q (x\otimes \xi + \xi \otimes x )(x \otimes               x)\\
         &= q \Delta(\xi)\Delta(x).
    \end{align*}
\item An  identical calculation to part (vii) upon replacing $\xi$ with $\theta$ shows that
\begin{equation*}
\Delta(x)\Delta(\theta) = q \Delta(\theta)\Delta(x).
\end{equation*}
\item \begin{align*}
      \Delta(x)\Delta(z) & =   (x\otimes x)(x \otimes z +z \otimes x) \\
       & = x^2  \otimes x z + x z \otimes x^2\\
         & = x^2 \otimes z x + z x \otimes  x^2\\
        & =  (x\otimes z + z \otimes x )(x \otimes                  x)\\
        & = \Delta(z)\Delta(x).
       \end{align*}
\item  $\Delta(\xi)\Delta(\xi)=0$ is  obviously satisfied. Direct calculation show this to be consistent.
\begin{align*}
 \Delta(\xi) \Delta(\xi) &= (x \otimes \xi + \xi \otimes x)(x \otimes \xi + \xi \otimes x)\\
 \nonumber & = - x \xi \otimes \xi x + \xi x \otimes x \xi\\
 & = - q q^{-1}\xi x \otimes x \xi + \xi x \otimes x \xi =0.
\end{align*}
\item $\Delta(\theta)\Delta(\theta)=0$ follows in the same way as in part (x).
\item  \begin{align*}
      \Delta(\xi)\Delta(\theta) & =  (x \otimes \xi + \xi \otimes x)(x\otimes \theta + \theta \otimes x)\\
     & = x^2 \otimes \xi \theta + x \theta \otimes \xi x + \xi x \otimes x \theta + \xi \theta \otimes x^2 \\
      & = x^2 \otimes \theta \xi + \theta x \otimes x \xi + x \xi \otimes \theta x + \theta \xi \otimes x^2\\
       & =  (x\otimes \theta + \theta \otimes x)(x \otimes \xi + \xi \otimes x )\\
       & = \Delta(\theta)\Delta(\xi).
       \end{align*}
\item \begin{align*}
\Delta(\xi)\Delta(z)  & = (x \otimes \xi + \xi \otimes x )(x \otimes z + z\otimes x)\\
 & = x^2 \otimes \xi z - x z \otimes \xi x + \xi x\otimes x z + \xi z \otimes x^2\\
  & = - q^{-1}(x^2 \otimes z \xi + z x \otimes x \xi - x \xi \otimes z x + z \xi \otimes x^2)\\
  &= - q^{-1}(x\otimes z + z \otimes x)(x \otimes \xi + \xi \otimes x)\\
  &= - q^{-1}\Delta(z)\Delta(\xi).
\end{align*}
\item The identical calculation to part (viii)  upon replacing $\xi$ with $\theta$  shows that
\begin{equation}
\Delta(\theta)\Delta(z) = - q^{-1}\Delta(z)\Delta(\theta).
\end{equation}
\end{enumerate}
This completes the proof.
\end{proof}
\subsection{The extended  double-graded quantum superplane and its Hopf algebra}
In order to build a $\mathbb{Z}_2^2$-Hopf algebra structure (see Definition \ref{def:HopfAlg}), we now extend the algebra of polynomials on the  double-graded quantum superplane to include the formal inverse of $x$, which we denote as $x^{-1}$, i.e., $x x^{-1} = x^{-1}x =1$. Clearly, the $\mathbb{Z}_2^2$-degree of $x^{-1}$ is $(0,0)$. It is easy to deduce the following commutation rules:
\begin{subequations}
\begin{align}
&x^{-1} \xi - q^{-1}\xi x^{-1} =0, \label{eqn:invcomrelq1} \\
& x^{-1} \theta - q^{-1}\theta x^{-1} =0 , \label{eqn:invcomrelq2}\\
 & x^{-1} z - z x^{-1} =0.\label{eqn:invcomrelq3}
\end{align}
\end{subequations}
\begin{definition}{Definition}{Definition}\label{def:extdoubleqplane}
The $\mathbb{Z}_2^2$-graded, associative, unital algebra
$$\mathcal{A}_q(x, x^{-1}, \xi, \theta, z) := \mathbb{C}[x,x^{-1}, \xi, \theta, z] / \mathbf{J},$$
where $\mathbf{J}$ is the ideal generated by the relations \eqref{Eqn:comrel1} to \eqref{Eqn:comrel8} and \eqref{eqn:invcomrelq1} to \eqref{eqn:invcomrelq3} is the \emph{algebra of polynomials on the extended  double-graded quantum superplane} $\overline{\mathbb{R}}_q(1|\mathbf{1})$.
\end{definition}

As the coproduct should be an algebra morphism we deduce that $\Delta(x^{-1}) = \Delta(x)^{-1}$. Thus,
\begin{equation}
 \Delta(x^{-1})= x^{-1}\otimes x^{-1}.
 \end{equation}
 Similarly, as the counit should be an algebra morphism we have
 that
 \begin{equation}
 \epsilon(x^{-1}) =1.
 \end{equation}
It is clear that upon appending $x^{-1}$ to the $\mathbb{Z}_2^2$-bialgebra  $\mathcal{A}_q(x, \xi, \theta, z)$ that we obtain another $\mathbb{Z}_2^2$-bialgebra. The counit and coassociativity are obvious and the compatibility condition between the algebra and coalgebra follows from the proof of Proposition \ref{prop:bialgq} upon $x \mapsto x^{-1}$ and $q \mapsto q^{-1}$.
\begin{theorem}{Theorem}
The $\mathbb{Z}_2^2$-bialgebra  $\mathcal{A}_q(x, x^{-1}, \xi, \theta, z)$ can be made into a (cocommutative and involutive) $\mathbb{Z}_2^2$-Hopf algebra by defining an antipode in the following way:
\begin{subequations}
\begin{align}
& \mathcal{S}(x) = x^{-1}, \\
& \mathcal{S}(x^{-1}) = x,\\
& \mathcal{S}(\xi) = - x^{-1} \xi x^{-1},\\
& \mathcal{S}(\theta) = - x^{-1} \theta x^{-1},\\
& \mathcal{S}(z) = - x^{-1} z x^{-1}.
\end{align}
\end{subequations}
\end{theorem}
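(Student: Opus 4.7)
The plan is to establish three things beyond what has already been noted: that $\mathcal{S}$ is well-defined on the quotient algebra as a $\mathbb{Z}_2^2$-graded antihomomorphism, that the two antipode identities of Definition \ref{def:HopfAlg} hold, and that $\mathcal{S}^2 = \textnormal{Id}_{\mathcal{A}}$. Cocommutativity carries over from the proof of Proposition \ref{prop:bialgq} since $\Delta(x^{-1}) = x^{-1}\otimes x^{-1}$ is trivially fixed by the interchange map $\sigma$, and grading preservation of $\mathcal{S}$ is immediate because conjugation by $x^{-1}$ (of degree $(0,0)$) leaves the degree of each generator unchanged.

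The first and most laborious step is well-definedness. I would extend $\mathcal{S}$ from the generators to the free algebra on $\{x,x^{-1},\xi,\theta,z\}$ via the twisted rule $\mathcal{S}(ab) = (-1)^{\langle\textnormal{deg}(a),\textnormal{deg}(b)\rangle}\mathcal{S}(b)\mathcal{S}(a)$, and then check that each defining relation in \eqref{Eqn:comrel1}--\eqref{Eqn:comrel8}, \eqref{eqn:invcomrelq1}--\eqref{eqn:invcomrelq3}, together with $xx^{-1}-1$, is annihilated. For \eqref{Eqn:comrel1} one obtains
$$\mathcal{S}(x\xi - q\,\xi x) = -x^{-1}\xi x^{-2} + q\,x^{-2}\xi x^{-1},$$
and two applications of \eqref{eqn:invcomrelq1} reduce both monomials to $\pm q^{-1}\xi x^{-3}$ with opposite signs that cancel. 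The nilpotency relations \eqref{Eqn:comrel4}, \eqref{Eqn:comrel5} survive because $\mathcal{S}(\xi)^2$ still carries a factor of $\xi^2$, and the twisted anticommutations \eqref{Eqn:comrel7}, \eqref{Eqn:comrel8} with $z$ close up after accounting for the extra minus sign produced by the antihomomorphism rule on two odd-degree generators. The relations involving $x^{-1}$ reduce to the above once $xx^{-1}=1$ is applied.

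With $\mathcal{S}$ well-defined, the antipode identities are checked on each generator and then extend to all of $\mathcal{A}$ by the standard convolution-algebra argument using that $\Delta$ is an algebra morphism. The cases $x$, $x^{-1}$ are trivial, while for $\xi$ one computes
$$\mu\circ(\mathcal{S}\otimes \textnormal{Id})\circ\Delta(\xi) = x^{-1}\xi + (-x^{-1}\xi x^{-1})x = 0 = \eta\circ\epsilon(\xi),$$
with the mirror identity and the cases $\theta,z$ following by identical manipulations; the right-hand side vanishes by Proposition \ref{prop:Z22counit}. Involutivity is the easiest step: $\mathcal{S}^2$ is automatically a grading-preserving algebra homomorphism (two graded antihomomorphisms compose to a homomorphism, the signs cancelling by symmetry of the bilinear form), so it suffices to verify $\mathcal{S}^2 = \textnormal{Id}$ on generators, and a direct computation $\mathcal{S}^2(\xi) = -\mathcal{S}(x^{-1})\mathcal{S}(\xi)\mathcal{S}(x^{-1}) = -x(-x^{-1}\xi x^{-1})x = \xi$ handles $\xi$, $\theta$, and $z$ uniformly. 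The main obstacle is the combinatorial bookkeeping in the first step: the non-naive choice $\mathcal{S}(\xi) = -x^{-1}\xi x^{-1}$ (rather than $-\xi$) is dictated precisely by compatibility with the $q$-commutation $x\xi=q\xi x$, and all the $q$ and sign cancellations hinge on consistently using \eqref{eqn:invcomrelq1}--\eqref{eqn:invcomrelq3} to move $\xi,\theta,z$ past powers of $x^{-1}$.
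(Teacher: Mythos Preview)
Your proof is correct and follows essentially the same approach as the paper: verify the antipode identities of Definition~\ref{def:HopfAlg} on each generator, note cocommutativity, and compute $\mathcal{S}^2$ on generators. You are more thorough than the paper in that you explicitly address well-definedness of $\mathcal{S}$ on the quotient algebra and sketch the convolution-algebra argument for extending the antipode identity from generators to the whole algebra, both of which the paper simply takes for granted.
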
{Theorem}
\begin{proof}\
We need to check that the antipode satisfies the condition specified in Definition \ref{def:HopfAlg}. It suffices to check this on each generator separately.
\renewcommand{\theenumi}{\roman{enumi}}
\begin{enumerate}
\setlength\itemsep{1em}
\item  $\mu(\mathcal{S}\otimes \textnormal{Id})\Delta(x) = \mu(x^{-1} \otimes x) =1 = \mu(x \otimes  x^{-1}) = \mu(\textnormal{Id} \otimes \mathcal{S})\Delta(x)$.
\item $\mu(\mathcal{S} \otimes \textnormal{Id})\Delta(\xi)  =  \mu(\mathcal{S}(x) \otimes \xi + \mathcal{S}(\xi) \otimes x)
 = \mu( x^{-1}\otimes \xi  - x^{-1} \xi x^{-1}\otimes x) =0. $\\
$ \mu( \textnormal{Id}\otimes \mathcal{S})\Delta(\xi) = \mu(x \otimes \mathcal{S}(\xi)  + \xi \otimes \mathcal{S}(x))
= \mu( x \otimes (- x^{-1}\xi x^{-1}) + \xi \otimes x^{-1} ) =0$.
\item An identical calculation to part (ii) shows that the required condition also holds for $\theta$ and $z$.
\end{enumerate}
It is clear that the coproduct is cocommutative and a simple calculation shows that $\mathcal{S}^2 = \textnormal{Id}$ (see Definition \ref{def:commcocomm} and Definition \ref{def:involutive}).
\end{proof}
\subsection{A bicovariant differential calculus} We build the differential calculus (see Definition \ref{def:DiffCal}) on the double-graded quantum superplane $\mathbb{R}_{q}(1|\mathbf{1})$ (see Remark \ref{rem:bicovariance}) using the following basis of one-forms
\begin{equation}\label{eqn:basisoneform}
(\underbrace{\mathrm{d}x}_{(1,0,0)}, \underbrace{\mathrm{d} \xi}_{(1,0,1)},\underbrace{\mathrm{d} \theta}_{(1,1,0)} , \underbrace{\mathrm{d}z}_{(1,1,1)})
\end{equation}
Using Definition \ref{def:lrcoaction}, the left coaction and right coaction of the basis of one-forms given in \eqref{eqn:basisoneform} is given by
\begin{subequations}
\begin{align}
& \Delta_L(\mathrm{d}x) = x \otimes \mathrm{d}x,\\
& \Delta_L(\mathrm{d}\xi) = x \otimes \mathrm{d}\xi + \xi \otimes \mathrm{d}x,\\
& \Delta_L(\mathrm{d}\theta) = x \otimes \mathrm{d}\theta + \theta \otimes \mathrm{d}x,\\
& \Delta_L(\mathrm{d}z) = x \otimes \mathrm{d}z + z \otimes \mathrm{d}x,
\end{align}
\end{subequations}
and
\begin{subequations}
\begin{align}
& \Delta_R(\mathrm{d}x) =\mathrm{d}x \otimes x,\\
& \Delta_R(\mathrm{d}\xi) = \mathrm{d}x \otimes \xi + \mathrm{d}\xi \otimes x,\\
& \Delta_R(\mathrm{d}\theta) = \mathrm{d}x \otimes \theta +\mathrm{d} \theta \otimes x,\\
& \Delta_R(\mathrm{d}z) = \mathrm{d} x \otimes z + \mathrm{d}z \otimes x.
\end{align}
\end{subequations}
We now proceed to find a consistent set of commutation rules on this differential calculi. To be explicit, by taking the de Rham derivative of the commutation rules for $(x, \xi, \theta, z) $ we arrive at the following.
\begin{lemma}{Lemma}\label{lem:condiff}
Any commutation rules between the coordinates/generators and their differentials must satisfy the following:
\begin{subequations}
\begin{align}
\setlength\itemsep{0em}
& (x    \mathrm{d}\xi - q     \mathrm{d}\xi    x) - q(\xi     \mathrm{d}x  - q^{-1}    \mathrm{d}x     \xi) =0,\\
& (x    \mathrm{d}\theta - q     \mathrm{d}\theta    x) - q(\theta     \mathrm{d}x  - q^{-1}    \mathrm{d}x     \theta) =0,\\
& (x    \mathrm{d}z-      \mathrm{d}z    x) - (z     \mathrm{d}x  -     \mathrm{d}x     z) =0,\\
&(\xi    \mathrm{d}\theta -      \mathrm{d}\theta    \xi) - (\theta     \mathrm{d}\xi  -     \mathrm{d}\xi    \theta) =0,\\
& (\xi    \mathrm{d}z + q^{-1}     \mathrm{d}z    \xi) + q^{-1}(z     \mathrm{d}\xi  + q    \mathrm{d}\xi     z) =0,\\
& (\theta    \mathrm{d}z + q^{-1}     \mathrm{d}z    \theta) + q^{-1}(z     \mathrm{d}\theta  + q    \mathrm{d}\theta    z) =0.
\end{align}
\end{subequations}
\end{lemma}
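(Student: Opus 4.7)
The plan is purely computational: apply the de Rham differential $\mathrm{d}$ to each of the defining commutation relations \eqref{Eqn:comrel1}--\eqref{Eqn:comrel8} of $\mathcal{A}_q$ and regroup the resulting identities. Since $\mathrm{d}$ is linear with $\mathrm{d}(0)=0$, every relation $R=0$ descends to an equation $\mathrm{d}R = 0$ in $\Omega^1(\mathcal{A}_q)$. Because each generator $a,b \in \{x,\xi,\theta,z\}$ lies in $\Omega^0(\mathcal{A}_q)\cong \mathcal{A}_q$, the graded Leibniz rule in Definition \ref{def:DiffCal} degenerates, for products of generators, to the ungraded form $\mathrm{d}(ab) = (\mathrm{d}a)\,b + a\,\mathrm{d}b$, since the sign $(-1)^p$ is trivial at $p=0$.

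First I would dispatch \eqref{Eqn:comrel1}, which is the prototype for (a), (b), (c). Applying $\mathrm{d}$ to $x\xi - q\xi x = 0$ yields
\begin{equation*}
\mathrm{d}x\cdot \xi + x\,\mathrm{d}\xi - q\,\mathrm{d}\xi\cdot x - q\,\xi\,\mathrm{d}x = 0,
\end{equation*}
which I would regroup as $(x\mathrm{d}\xi - q\,\mathrm{d}\xi\, x) - q(\xi\,\mathrm{d}x - q^{-1}\mathrm{d}x\,\xi) = 0$, reproducing (a); the exact same manipulation applied to \eqref{Eqn:comrel2} and \eqref{Eqn:comrel3} delivers (b) and (c). Next I would apply $\mathrm{d}$ to the mutual-commutation relation \eqref{Eqn:comrel6} to extract (d), and finally to the twisted anticommutators \eqref{Eqn:comrel7}--\eqref{Eqn:comrel8} to extract (e) and (f); in the last two cases the key bookkeeping step is pulling the factor $q^{-1}$ out of the pair of terms that mix $z$ with $\mathrm{d}\xi$ (respectively $\mathrm{d}\theta$) so that the stated symmetric form emerges.

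The remaining two defining relations \eqref{Eqn:comrel4} and \eqref{Eqn:comrel5}, namely $\xi^2=0$ and $\theta^2=0$, also produce identities under $\mathrm{d}$, but these are internal to a single generator and its differential, of the shape $\xi\,\mathrm{d}\xi + \mathrm{d}\xi\,\xi = 0$ and $\theta\,\mathrm{d}\theta + \mathrm{d}\theta\,\theta = 0$; they lie outside the class of bilinear cross-relations asserted by the lemma and I would flag them only as a brief aside. There is no genuine obstacle here: the content of the lemma is merely that the Leibniz rule transports the ideal $\mathbf{J}$ into a minimal set of linear conditions that every candidate commutation rule between generators and their differentials must satisfy, and each of (a)--(f) follows from a single application of $\mathrm{d}$ and an elementary rearrangement.
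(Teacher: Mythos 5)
Your proposal is correct and coincides with the paper's own (very terse) justification: the lemma is obtained precisely by applying $\mathrm{d}$ to the defining relations \eqref{Eqn:comrel1}--\eqref{Eqn:comrel3} and \eqref{Eqn:comrel6}--\eqref{Eqn:comrel8} via the Leibniz rule and regrouping, exactly as you describe. Your aside on $\xi^2=0$ and $\theta^2=0$ yielding $\xi\,\mathrm{d}\xi + \mathrm{d}\xi\,\xi = 0$ and its $\theta$-analogue is also consistent with the paper, where these appear later as \eqref{Eqn:ValCom6} and \eqref{Eqn:ValCom11}.
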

We propose a particular set of commutation rules between the coordinates and the differentials that is linear in the coordinates and, in particular, treats the two sectors $(\xi, \mathrm{d}\xi)$ and $(\theta, \mathrm{d} \theta)$ equally. The reader should compare our choice the Type I differential calculi on the quantum superplane $\mathbb{R}_q(1|1)$ as given by \c{C}elik \cite{cel06}.
\begin{theorem}{Theorem}\label{thm:valcomrules}
A set of valid  commutation rules (in the sense of Lemma \ref{lem:condiff} and is consistent with the bicovariance) that is linear in $(x, \xi, \theta, z) $ is the following:
\begin{subequations}
\begin{align*}
& x   \mathrm{d}x =  \mathrm{d}x    x, \Label{Eqn:ValCom1}&&  x     \mathrm{d}\xi = q     \mathrm{d}\xi     x , \Label{Eqn:ValCom2}\\
&  x     \mathrm{d}\theta = q     \mathrm{d}\theta    x,\Label{Eqn:ValCom3}&& x     \mathrm{d}z =  \mathrm{d}z    x,\Label{Eqn:ValCom4} \\
\\
& \xi     \mathrm{d}x =  q^{-1}    \mathrm{d}x     \xi,  \Label{Eqn:ValCom5} &&\xi     \mathrm{d}\xi = - \mathrm{d} \xi     \xi,\Label{Eqn:ValCom6}\\
& \xi     \mathrm{d}\theta =  \mathrm{d}\theta     \xi, \Label{Eqn:ValCom7} && \xi     \mathrm{d}z = -q^{-1}     \mathrm{d}z     \xi, \Label{Eqn:ValCom8}\\
\\
& \theta     \mathrm{d}x =   q^{-1}    \mathrm{d}x     \theta,\Label{Eqn:ValCom9} &&  \theta    \mathrm{d}\xi = \mathrm{d}\xi    \theta, \Label{Eqn:ValCom10}\\
& \theta     \mathrm{d}\theta= - \mathrm{d} \theta     \theta, \Label{Eqn:ValCom11} && \theta     \mathrm{d}z = -q^{-1}     \mathrm{d}z     \theta, \Label{Eqn:ValCom12}\\
\\
&z     \mathrm{d}x = \mathrm{d}x     z, \Label{Eqn:ValCom13}&& z    \mathrm{d}\xi = - q    \mathrm{d}\xi    z ,\Label{Eqn:ValCom14}\\
& z \mathrm{d}\theta =  - q \mathrm{d}\theta    z , \Label{Eqn:ValCom15} && z    \mathrm{d}z =  \mathrm{d}z    z \Label{Eqn:ValCom16}.
\end{align*}
\end{subequations}
\end{theorem}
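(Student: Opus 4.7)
The plan is to verify the two required consistency conditions by direct substitution generator-by-generator.

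First I would check that the sixteen proposed commutation rules \eqref{Eqn:ValCom1}--\eqref{Eqn:ValCom16} satisfy the six constraints of Lemma \ref{lem:condiff}. Each constraint involves exactly two of the proposed rules, obtained by exchanging the roles of a coordinate with a partner coordinate and its differential. For instance, constraint (a) pairs \eqref{Eqn:ValCom2} and \eqref{Eqn:ValCom5}: substituting $x\,\mathrm{d}\xi = q\,\mathrm{d}\xi\, x$ and $\xi\,\mathrm{d}x = q^{-1}\mathrm{d}x\,\xi$ gives $(q\mathrm{d}\xi\, x - q\mathrm{d}\xi\, x) - q(q^{-1}\mathrm{d}x\,\xi - q^{-1}\mathrm{d}x\,\xi) = 0$, as required. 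The remaining five constraints are handled identically by pairing their mirror rules. This step is routine.

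Second, I would verify consistency with bicovariance. For each proposed rule of the form $a\,\mathrm{d}b - c_{ab}\,\mathrm{d}b\cdot a = 0$, I would apply $\Delta_L$ to both sides using the formula of Definition \ref{def:lrcoaction} together with the coactions on the basis one-forms \eqref{eqn:basisoneform}, and check that the resulting expression in $\mathcal{A}\otimes\Omega(\mathcal{A})$ vanishes modulo the coordinate commutation relations \eqref{Eqn:comrel1}--\eqref{Eqn:comrel8} applied on each tensor factor. Concretely, one expands $\Delta_L(a\,\mathrm{d}b) = \Delta(a)(\textnormal{Id}_\mathcal{A}\otimes\mathrm{d})\Delta(b)$ via the $\mathbb{Z}_2^3$-graded tensor product multiplication, producing finitely many monomial terms which, using the sign rule together with the coordinate commutation rules, rearrange to the corresponding expansion of $c_{ab}\,\Delta_L(\mathrm{d}b\cdot a)$. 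An analogous verification for $\Delta_R$ completes the check.

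The main obstacle will be the careful bookkeeping of sign factors from the $\mathbb{Z}_2^3$-graded tensor product, since the differentials $\mathrm{d}\xi$, $\mathrm{d}\theta$, $\mathrm{d}z$ carry nontrivial $\mathbb{Z}_2^3$-degrees and mix with coordinates of the same underlying $\mathbb{Z}_2^2$-grading via the coactions. In particular, the relations involving $z$ paired with $\xi$ or $\theta$ require simultaneously tracking the scalar-product sign $\langle\mathrm{deg}(b),\mathrm{deg}(c)\rangle$ appearing in $(a\otimes b)(c\otimes d) = (-1)^{\langle\mathrm{deg}(b),\mathrm{deg}(c)\rangle}ac\otimes bd$ and the coefficients $q^{\pm 1}$ from the commutation rules. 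Fortunately, the symmetry of the proposed rules under the interchange $\xi \leftrightarrow \theta$ roughly halves the number of genuinely independent checks, so the full verification reduces to a tractable, if tedious, table of cases.
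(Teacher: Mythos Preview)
Your proposal is correct and follows essentially the same approach as the paper: verify the Lemma~\ref{lem:condiff} constraints by direct substitution, then check bicovariance by expanding $\Delta_L$ and $\Delta_R$ on each relation using the coactions and the graded tensor product, with the $\xi\leftrightarrow\theta$ symmetry reducing the number of independent cases. The paper carries out one representative bicovariance check (for \eqref{Eqn:ValCom2}) explicitly and omits the rest as analogous, exactly as you propose.
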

\begin{proof}
It is a simple observation that the above relations satisfy the conditions of Lemma \ref{lem:condiff}. It remains to check that these relations are consistent with the bicovariance (see Definition \ref{def:lrcoaction}). This is a series of direct computations.  For instance, consider the commutation rule \eqref{Eqn:ValCom2}.
\begin{align*}
\Delta_L(x     \mathrm{d}\xi - q     \mathrm{d}\xi     x) & =  \Delta(x) \Delta_L(\mathrm{d}\xi) - q     \Delta_L(\mathrm{d}) \Delta(x)\\
& = (x\otimes x)(x \otimes \mathrm{d}\xi + \xi \otimes \mathrm{d}x) - q     (x \otimes \mathrm{d}\xi + \xi \otimes \mathrm{d}x)(x\otimes x)\\
&= x^2 \otimes x \mathrm{d}\xi  + x \xi \otimes x \mathrm{d}x - q    x^2 \otimes \mathrm{d}\xi x - q    \xi x \otimes \mathrm{d}x x\\
&=0.
\end{align*}
Thus, \eqref{Eqn:ValCom2} respects the left-covariance.
\begin{align*}
\Delta_R(x     \mathrm{d}\xi - q     \mathrm{d}\xi     x) & =  \Delta(x) \Delta_R(\mathrm{d}\xi) - q     \Delta_R(\mathrm{d}) \Delta(x)\\
& = (x\otimes x)(\mathrm{d}x \otimes \xi + \mathrm{d}\xi \otimes x) - q     (\mathrm{d}x \otimes \xi + \mathrm{d}\xi \otimes x)(x\otimes x)\\
&= x \mathrm{d}x \otimes x \xi + x \mathrm{d}\xi \otimes x^2 -  q    \mathrm{d}xx \otimes \xi x\ - q    \mathrm{d}\xi x \otimes x^2\\
&=0.
\end{align*}
And so we observe that  \eqref{Eqn:ValCom2} also respects the right-covariance and so bicovariance is established. All the other commutation relations can be shown to respect the bicovariance via almost identical calculation and so we omit details.
\end{proof}
\begin{remark}{Remark}
There are clearly other choices of differential calculi that could be made. The classification of the possible bicovariant differential calculi is an important question. However, we will not touch on this in this paper.
\end{remark}
In order to construct higher order differential forms we need to deduce the commutation rules between the differentials. This is easily achieved by applying the de Rham differential to Theorem \ref{thm:valcomrules}.
\begin{theorem}{Theorem}\label{thm:ComRulesxdx}
The (non-trivial) commutation rules between the differentials are:
\begin{subequations}
\begin{align}
&\mathrm{d}x     \mathrm{d}\xi = - q    \mathrm{d}\xi     \mathrm{d}x,\\
&\mathrm{d}x     \mathrm{d}\theta = - q     \mathrm{d}\theta     \mathrm{d}x,\\
&\mathrm{d}x     \mathrm{d}z = - \mathrm{d}z     \mathrm{d}x,\\
\nonumber \\
&\mathrm{d}\xi     \mathrm{d}\theta = -  \mathrm{d}\theta     \mathrm{d}\xi,\\
&\mathrm{d}\xi     \mathrm{d}z =  q^{-1}   \mathrm{d}z     \mathrm{d}\xi,\\
\nonumber \\
&\mathrm{d}\theta     \mathrm{d}z =  q^{-1}    \mathrm{d}z    \mathrm{d}\theta.
\end{align}
\end{subequations}
Moreover,  $(\mathrm{d}x)^2 = (\mathrm{d}z)^2 =0$.
\end{theorem}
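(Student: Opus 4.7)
The plan is to apply the de Rham differential $\mathrm{d}$ to each of the sixteen commutation rules listed in Theorem \ref{thm:valcomrules} and then exploit $\mathrm{d}^2 = 0$ together with the graded Leibniz rule of Definition \ref{def:DiffCal}(ii). Since the Leibniz sign in Definition \ref{def:DiffCal}(ii) is controlled by the $\mathbb{N}$-grading (form degree) alone, for any two generators $a,b$ of $\mathcal{A}_q$ we have $\mathrm{d}(a\,\mathrm{d}b) = \mathrm{d}a\,\mathrm{d}b$ and $\mathrm{d}(\mathrm{d}b \, a) = -\mathrm{d}b\,\mathrm{d}a$, where in the second identity the factor $-1$ comes from the Leibniz sign $(-1)^{1}$ on a $1$-form and the vanishing of $\mathrm{d}(\mathrm{d}b)$.

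With this mechanism in hand, each relation of the form $a\,\mathrm{d}b = c\,\mathrm{d}b\, a$ with $c \in \mathbb{C}_*$ transforms upon application of $\mathrm{d}$ into $\mathrm{d}a\,\mathrm{d}b = -c\,\mathrm{d}b\,\mathrm{d}a$, i.e.\ the overall sign flips. For instance, from \eqref{Eqn:ValCom2} one reads off $\mathrm{d}x\,\mathrm{d}\xi = -q\,\mathrm{d}\xi\,\mathrm{d}x$; from \eqref{Eqn:ValCom14} one gets $\mathrm{d}z\,\mathrm{d}\xi = q\,\mathrm{d}\xi\,\mathrm{d}z$, equivalently $\mathrm{d}\xi\,\mathrm{d}z = q^{-1}\mathrm{d}z\,\mathrm{d}\xi$; and so on for each non-trivial pair. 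One should note that the system is overdetermined: each of the six differential commutation rules in the statement is produced by two of the sixteen coordinate-differential relations (for example, \eqref{Eqn:ValCom2} and \eqref{Eqn:ValCom5} both yield the same relation for $\mathrm{d}x\,\mathrm{d}\xi$), providing a consistency check.

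For the nilpotency claims, I apply $\mathrm{d}$ to \eqref{Eqn:ValCom1} and \eqref{Eqn:ValCom16}: the identity $x\,\mathrm{d}x - \mathrm{d}x\, x = 0$ differentiates to $\mathrm{d}x\,\mathrm{d}x - (-\mathrm{d}x\,\mathrm{d}x) = 2(\mathrm{d}x)^2 = 0$, and since we work over $\mathbb{C}$ we conclude $(\mathrm{d}x)^2 = 0$; the identical argument applied to \eqref{Eqn:ValCom16} gives $(\mathrm{d}z)^2 = 0$.

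The only genuinely subtle point — more a feature than an obstacle — is that the ``self-fermion'' relations \eqref{Eqn:ValCom6} and \eqref{Eqn:ValCom11}, i.e.\ $\xi\,\mathrm{d}\xi = -\mathrm{d}\xi\,\xi$ and $\theta\,\mathrm{d}\theta = -\mathrm{d}\theta\,\theta$, differentiate to tautologies $\mathrm{d}\xi\,\mathrm{d}\xi = \mathrm{d}\xi\,\mathrm{d}\xi$ and analogously for $\theta$. Thus they impose \emph{no} constraint on $(\mathrm{d}\xi)^2$ or $(\mathrm{d}\theta)^2$; this is consistent with the statement of the theorem, which asserts nilpotency only for $\mathrm{d}x$ and $\mathrm{d}z$, and reflects the fact that $\mathrm{d}\xi$ and $\mathrm{d}\theta$ carry odd form degree but their $\mathbb{Z}_2^2$-degrees $(0,1)$ and $(1,0)$ are such that the corresponding ``self-braiding'' factor does not force squaring to zero.
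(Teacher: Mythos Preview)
Your proof is correct and follows exactly the approach indicated in the paper: apply $\mathrm{d}$ to the coordinate--differential relations of Theorem~\ref{thm:valcomrules} using the Leibniz rule and $\mathrm{d}^2=0$, and for the nilpotency of $\mathrm{d}x$ and $\mathrm{d}z$ exploit that $x$ (resp.\ $z$) strictly commutes with its differential so that $\mathrm{d}(x\,\mathrm{d}x)=\mathrm{d}x\,\mathrm{d}x=\mathrm{d}(\mathrm{d}x\,x)=-\mathrm{d}x\,\mathrm{d}x$. Your additional remarks on the overdetermined nature of the system and on why \eqref{Eqn:ValCom6} and \eqref{Eqn:ValCom11} differentiate to tautologies (so that $(\mathrm{d}\xi)^2$ and $(\mathrm{d}\theta)^2$ remain unconstrained) are nice elaborations beyond what the paper spells out.
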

\begin{proof}
Direct computation gives the mixed commutation rules and so we omit details. The nilpotency of $\mathrm{d}x$ and $\mathrm{d}z$ directly follows as, for example, $\mathrm{d}(x    \mathrm{d}x) = \mathrm{d}x \mathrm{d}x $, but then using the fact that $x$ and $\mathrm{d}x$ strictly commute    $\mathrm{d}(x    \mathrm{d}x) = \mathrm{d}( \mathrm{d}x   x) = - \mathrm{d}x    \mathrm{d}x $. Exactly the same reasoning establishes that $\mathrm{d}z$ is also nilpotent.
\end{proof}
Theorem \ref{thm:valcomrules} and Theorem \ref{thm:ComRulesxdx} allow one to deduce the explicit form of a  bicovariant differential calculi  (see Definition \ref{def:DiffCal}) on $\mathbb{R}_q(1|\mathbf{1})$.
\begin{remark}{Remark}
Unsurprisingly, just as for supermanifolds and $\mathbb{Z}_2^n$-manifolds, there are \emph{no} top-forms on $\mathbb{R}_q(1|\mathbf{1})$ due to the fact that $\mathrm{d}\xi$ and $\mathrm{d}\theta$ are \emph{not} nilpotent. To see this one has to observe that, for instance, $\xi$ and $\mathrm{d}\xi$ strictly anticommute. This extra minus sign does not allow us to conclude that  $\mathrm{d}\xi$ is nilpotent.
\end{remark}
We now deduce the commutation relations between the partial derivatives $\{\partial_x, \partial_{\xi}, \partial_{\theta}, \partial_z \}$ and the generators/coordinates on the double-graded quantum superplane.  This is done by careful examination of the de Rham differential, which is of the form
\begin{equation}
\mathrm{d} = \mathrm{d}x \partial_x + \mathrm{d}\xi \partial_{\xi} + \mathrm{d}\theta \partial_{\theta} + \mathrm{d}z \partial_z.
\end{equation}
\begin{theorem}{Theorem}\label{thm:ComPartCoor}
The commutation rules between partial derivatives and the coordinates are:
\begin{subequations}
\begin{align*}
& \partial_x x  = 1 + x \partial_x, \Label{eqn:ComPartCoor1}
&& \partial_{\xi} x  = q     x \partial_\xi, \Label{eqn:ComPartCoor2}\\
& \partial_{\theta} x = q     x \partial_{\theta}, \Label{eqn:ComPartCoor3} && \partial_z x  =  x \partial_z, \Label{eqn:ComPartCoor4}\\
 \\
& \partial_x \xi  = q^{-1}     \xi \partial _x, \Label{eqn:ComPartCoor5}
&& \partial_{\xi} \xi  = 1 - \xi \partial_{\xi},\Label{eqn:ComPartCoor6} \\
& \partial_{\theta} \xi = \xi \partial_{\theta} , \Label{eqn:ComPartCoo7}
&& \partial_z \xi  =   -  q^{-1}    \xi \partial_z,\Label{eqn:ComPartCoor8}\\
 \\
& \partial_x \theta  = q^{-1}    \theta \partial_x, \Label{eqn:ComPartCoor9}
&& \partial_{\xi} \theta  = \theta \partial \xi, \Label{eqn:ComPartCoor10}\\
& \partial_{\theta} \theta = 1 - \theta \partial_{\theta}, \Label{eqn:ComPartCoor11}
&& \partial_z \theta  =  - q^{-1}     \theta \partial_z , \Label{eqn:ComPartCoor12}\\
 \\
& \partial_x z  =  z \partial_x, \Label{eqn:ComPartCoor13}
&& \partial_{\xi} z  =  - q     z \partial_{\xi}, \Label{eqn:ComPartCoor14}\\
& \partial_{\theta} z =  - q     z \partial_{\theta},\Label{eqn:ComPartCoor15}
&& \partial_z z   = 1 + z \partial_z. \Label{eqn:ComPartCoor16}
\end{align*}
\end{subequations}
\end{theorem}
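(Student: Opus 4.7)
The plan is to exploit the explicit form $\mathrm{d} = \mathrm{d}x\,\partial_x + \mathrm{d}\xi\,\partial_\xi + \mathrm{d}\theta\,\partial_\theta + \mathrm{d}z\,\partial_z$ together with the Leibniz rule from Definition \ref{def:DiffCal}, applied to a product $y \cdot f$ where $y \in \{x,\xi,\theta,z\}$ is a generator and $f \in \mathcal{A}_q$ is arbitrary. Since $y$ is a $0$-form, no sign appears in the Leibniz rule, and one has
$$\mathrm{d}(yf) \;=\; (\mathrm{d}y)\,f \;+\; y\,\mathrm{d}f \;=\; \mathrm{d}y \cdot f \;+\; \sum_{i} y\,\mathrm{d}x^{i}\,\partial_{i} f.$$
I would then use the commutation rules of Theorem \ref{thm:valcomrules} to move the factor $y$ past each $\mathrm{d}x^{i}$, producing an expression of the form $\sum_{i} \mathrm{d}x^{i}\,G_{i}(y,f)$ with an explicit element $G_{i}(y,f) \in \mathcal{A}_q$ whose prefactors are $\pm 1$ or $\pm q^{\pm 1}$, read directly off relations \eqref{Eqn:ValCom1}--\eqref{Eqn:ValCom16}.

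Comparing this with the defining identity $\mathrm{d}(yf)=\sum_i \mathrm{d}x^i\,\partial_i(yf)$ and using the fact that $\Omega^{1}(\mathcal{A}_q)$ is a free \emph{left} $\mathcal{A}_q$-module on the basis \eqref{eqn:basisoneform}, I may equate coefficients of each $\mathrm{d}x^{i}$ to obtain $\partial_i(yf) = G_i(y,f)$. Because $f$ is arbitrary, this becomes an operator identity between $\partial_i$ and left multiplication by $y$, yielding exactly the relations \eqref{eqn:ComPartCoor1}--\eqref{eqn:ComPartCoor16}. The additive term $(\mathrm{d}y)f$ is responsible for the inhomogeneous summand (the constant $1$) appearing in the four ``diagonal'' rules $\partial_x x = 1 + x\partial_x$, $\partial_\xi \xi = 1 - \xi\partial_\xi$, $\partial_\theta \theta = 1 - \theta\partial_\theta$ and $\partial_z z = 1 + z\partial_z$, while all other contributions come from the commutation rules of Theorem \ref{thm:valcomrules}. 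For example, with $y = \xi$ the relation $\xi\,\mathrm{d}\xi = -\mathrm{d}\xi\,\xi$ immediately accounts for the distinctive minus sign in the ``fermionic'' diagonal identity.

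The calculation then reduces to a routine case analysis, one per generator. The only point requiring genuine care is the sign convention in the Leibniz rule: since the sign in Definition \ref{def:DiffCal} depends only on the form-degree $p$ and \emph{not} on the $\mathbb{Z}_2^2$-degree, no additional grading factor appears when differentiating a product of two $0$-forms; this is precisely why the relations involve only the $q$-powers read off from Theorem \ref{thm:valcomrules} plus the overall $\pm 1$ coming from the diagonal rules $y\,\mathrm{d}y = \pm\,\mathrm{d}y\,y$. Consistency of the sixteen resulting identities is automatic, being inherited from the consistency of the commutation rules in Theorem \ref{thm:valcomrules} with the defining ideal of $\mathcal{A}_q$, so there is no genuine obstacle beyond careful book-keeping.
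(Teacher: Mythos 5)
Your proposal is correct and follows essentially the same route as the paper: apply $\mathrm{d}$ to $yf$ for each generator $y$ and arbitrary $f\in\mathcal{A}_q$, expand via the Leibniz rule and the expression $\mathrm{d}=\mathrm{d}x\,\partial_x+\mathrm{d}\xi\,\partial_\xi+\mathrm{d}\theta\,\partial_\theta+\mathrm{d}z\,\partial_z$, push $y$ past the differentials using Theorem \ref{thm:valcomrules}, and equate coefficients of the basis one-forms. Your explicit remarks on the freeness of $\Omega^1(\mathcal{A}_q)$ as a left module and on the degree-$0$ Leibniz sign only make explicit what the paper leaves implicit, so there is nothing to add.
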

\begin{proof}
Consider $xf$, where $f \in \mathcal{A}_q$ is arbitrary. Directly from the definition of the de Rham differential, the fact that it satisfies the Leibniz rule and the commutation rules of Theorem \ref{thm:ComRulesxdx} see that
\begin{align*}
\mathrm{d} (xf) & = \mathrm{d}x \partial_x(xf) + \mathrm{d}\xi \partial_{\xi}(xf) + \mathrm{d}\theta \partial_{\theta}(xf) + \mathrm{d}z \partial_z(xf)\\
 & = \mathrm{d}f+ \mathrm{d}x     x\partial_x f + \mathrm{d}\xi     q x \partial_{\xi} f + \mathrm{d}\theta     q x\partial_{\theta} f + \mathrm{d}z      x\partial_z f.
\end{align*}
Equating the terms in the differentials produces the first block of identities, i.e., \eqref{eqn:ComPartCoor1} to \eqref{eqn:ComPartCoor4}. Via an almost identical calculation, by considering $\xi f$ one obtains \eqref{eqn:ComPartCoor5} to \eqref{eqn:ComPartCoor8}. Similarly, by considering $\theta f$  one obtains \eqref{eqn:ComPartCoor9} to \eqref{eqn:ComPartCoor12}  and  $z f$  one obtains \eqref{eqn:ComPartCoor13} to \eqref{eqn:ComPartCoor16}.
\end{proof}
\begin{definition}{Definition}
The $\mathcal{A}_q$-module of \emph{first-order differential operators on the double-graded quantum superplane}, which we denote as $\mathcal{D}^1(\mathcal{A}_q)$, is the $\mathcal{A}_q$-bimodule generated by the partial derivatives $\{\partial_x, \partial_{\xi}, \partial_{\theta}, \partial_z \}$, subject to the relations  \eqref{eqn:ComPartCoor1} to \eqref{eqn:ComPartCoor16}
\end{definition}
\begin{proposition}{Proposition}
The commutation rules  between the partial derivatives are:
\begin{subequations}
\begin{align*}
& \partial_x \partial_{\xi} = q     \partial_{\xi} \partial_x,\Label{Eqn:ComPart1} && \partial_x \partial_{\theta} = q    \partial_{\theta}\partial_x, \Label{Eqn:ComPart2}\\
& \partial_x \partial_z = \partial_z \partial_x, \Label{Eqn:ComPart3}&& \partial_{\xi} \partial_{\theta} = \partial_{\theta} \partial_{\xi}, \Label{Eqn:ComPart4}\\
& \partial_{\xi} \partial_z = - q^{-1}     \partial_z \partial_{\xi}, \Label{Eqn:ComPart5} && \partial_{\theta} \partial_z = - q^{-1}     \partial_z \partial_{\theta}, \Label{Eqn:ComPart6}\\
& \partial_{\xi} \partial_{\xi} =0, \Label{Eqn:ComPart7}&& \partial_{\theta} \partial_{\theta} =0. \Label{Eqn:ComPart8}
\end{align*}
\end{subequations}
\end{proposition}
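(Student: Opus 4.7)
The plan is to exploit the fundamental identity $\mathrm{d}^2 = 0$ on $\mathcal{A}_q$ together with the expansion $\mathrm{d} = \mathrm{d}x \, \partial_x + \mathrm{d}\xi \, \partial_\xi + \mathrm{d}\theta \, \partial_\theta + \mathrm{d}z \, \partial_z$. Since the differentials' mutual commutation rules have already been pinned down in Theorem~\ref{thm:ComRulesxdx}, any constraint on the $\partial_a$ that follows from $\mathrm{d}^2 f = 0$ for all $f \in \mathcal{A}_q$ should produce exactly the claimed relations; no further input about the calculus is required.

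Concretely, I would compute $\mathrm{d}^2 f$ for an arbitrary $f \in \mathcal{A}_q$. Applying $\mathrm{d}$ to $\sum_a \mathrm{d}a \, \partial_a f$, using the graded Leibniz rule from Definition~\ref{def:DiffCal} together with $\mathrm{d}^2 a = 0$, one gets
\begin{equation*}
0 \;=\; \mathrm{d}^2 f \;=\; -\sum_{a,b} \mathrm{d}a \, \mathrm{d}b \, \partial_b \partial_a f,
\end{equation*}
where $a,b$ range over $\{x,\xi,\theta,z\}$. Separating diagonal from off-diagonal contributions and, for each pair with $a\neq b$, rewriting $\mathrm{d}b \, \mathrm{d}a = c_{ab}^{-1} \, \mathrm{d}a \, \mathrm{d}b$ via Theorem~\ref{thm:ComRulesxdx} (with $c_{x\xi} = c_{x\theta} = -q$, $c_{xz} = c_{\xi\theta} = -1$, and $c_{\xi z} = c_{\theta z} = q^{-1}$), the identity collapses to
\begin{equation*}
\sum_a (\mathrm{d}a)^2 \, \partial_a^2 f \;+\; \sum_{a<b} \mathrm{d}a \, \mathrm{d}b \, \bigl(\partial_b \partial_a + c_{ab}^{-1} \partial_a \partial_b\bigr) f \;=\; 0.
\end{equation*}
Invoking the linear independence of the monomials $\mathrm{d}a \, \mathrm{d}b \cdot g$ (for $a\leq b$ and $g$ in the PBW basis \eqref{Eqn:PBWbasis}) inside $\Omega^2(\mathcal{A}_q)$ forces each bracketed coefficient to vanish. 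The off-diagonal identities then read $\partial_b \partial_a = -c_{ab}^{-1} \partial_a \partial_b$, which after substituting the six values of $c_{ab}$ reproduces verbatim the six mixed commutation relations asserted in the proposition.

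For the diagonal, $(\mathrm{d}x)^2 = (\mathrm{d}z)^2 = 0$ by Theorem~\ref{thm:ComRulesxdx}, so those terms vanish automatically and yield no information on $\partial_x^2$ or $\partial_z^2$ (consistent with the absence of such relations in the statement). However, the remark earlier in this section records that $(\mathrm{d}\xi)^2$ and $(\mathrm{d}\theta)^2$ are \emph{not} zero, and hence the vanishing of their coefficients forces $\partial_\xi^2 = \partial_\theta^2 = 0$. The only non-routine step is the linear-independence input, which converts the single vanishing $2$-form identity into eight separate operator relations; I expect this to be the conceptual obstacle, since all other ingredients are already encoded in the preceding theorems. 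A fully hands-on alternative would act both sides of each proposed relation on the PBW basis \eqref{Eqn:PBWbasis} using Theorem~\ref{thm:ComPartCoor} and verify equality directly, at the cost of a lengthy but entirely mechanical sign- and $q$-tracking computation.
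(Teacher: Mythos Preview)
Your argument via $\mathrm{d}^2=0$ is correct and yields all eight relations. The paper, however, takes a different route: it acts both sides of each candidate relation on the PBW monomials $x^m\xi^\alpha\theta^\beta z^n$ using the partial-derivative/coordinate rules of Theorem~\ref{thm:ComPartCoor}, and compares the resulting $q$-factors directly (e.g.\ $\partial_x\partial_\xi$ produces $m\,q^m$ while $\partial_\xi\partial_x$ produces $m\,q^{m-1}$, forcing the factor $q$). For \eqref{Eqn:ComPart7} and \eqref{Eqn:ComPart8} the paper simply invokes the nilpotency of $\xi,\theta$ rather than the non-vanishing of $(\mathrm{d}\xi)^2,(\mathrm{d}\theta)^2$.

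Your approach is more structural: once Theorem~\ref{thm:ComRulesxdx} is in hand, the partial-derivative relations drop out simultaneously from a single identity, and you never touch Theorem~\ref{thm:ComPartCoor}. The price is the linear-independence step you flag---that the ordered $\mathrm{d}a\,\mathrm{d}b$ (with $(\mathrm{d}x)^2=(\mathrm{d}z)^2$ excluded) form a free right $\mathcal{A}_q$-basis of $\Omega^2(\mathcal{A}_q)$---which is standard for calculi of this type but not proved explicitly in the paper. The paper's direct computation avoids this assumption entirely at the cost of a case-by-case check; amusingly, your ``fully hands-on alternative'' is precisely what the paper does.
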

\begin{proof}
The proof is obtained by comparing the action of the partial derivatives on the PBW basis \eqref{Eqn:PBWbasis}.  For instance,
$$\partial_x \partial_{\xi}(x^m \xi^{\alpha} \theta^{\beta} z^m) = m q^m (x^{m-1}\theta^{\beta} z^n),$$
where we have assumed that $\alpha =1$. On the other hand,
$$\partial_{\xi} \partial_{x}(x^m \xi^{\alpha} \theta^{\beta} z^m) = m q^{m-1} (x^{m-1}\theta^{\beta} z^n).$$
Comparing the two produces \eqref{Eqn:ComPart1}. One obtains \eqref{Eqn:ComPart2} to \eqref{Eqn:ComPart6} in a similar way and so we omit details. The nilpotent nature of $\xi$ and $\theta$ directly imply \eqref{Eqn:ComPart7} and \eqref{Eqn:ComPart8}.
\end{proof}
\begin{proposition}{Proposition}
The commutation rules between the partial derivatives are differentials are:
\begin{subequations}
\begin{align*}
& \partial_x    \mathrm{d}x = \mathrm{d}x    \partial_x , \Label{Eqn:ComParDif1} && \partial_x     \mathrm{d}\xi = q^{-1}     \mathrm{d}\xi     \partial_x , \Label{Eqn:ComParDif2}\\
& \partial_x     \mathrm{d}\theta = q^{-1}     \mathrm{d}\theta     \partial_x , \Label{Eqn:ComParDif3} && \partial_x    \mathrm{d}z = \mathrm{d}z    \partial_x , \Label{Eqn:ComParDif4}\\
\\
& \partial_{\xi}    \mathrm{d}x =  q    \mathrm{d}x    \partial_{\xi}, \Label{Eqn:ComParDif5} &&\partial_{\xi}    \mathrm{d}\xi =  - \mathrm{d}\xi    \partial_{\xi}, \Label{Eqn:ComParDif6}\\
 & \partial_{\xi}    \mathrm{d}\theta =   \mathrm{d}\theta    \partial_{\xi}, \Label{Eqn:ComParDif7} &&\partial_{\xi}    \mathrm{d}z =  -q      \mathrm{d}z    \partial_{\xi}, \Label{Eqn:ComParDif8}\\
 \\
& \partial_{\theta}    \mathrm{d}x =  q    \mathrm{d}x    \partial_{\theta}, \Label{Eqn:ComParDif9} &&\partial_{\theta}    \mathrm{d}\theta =  - \mathrm{d}\theta    \partial_{\theta}, \Label{Eqn:ComParDif10}\\
 & \partial_{\theta}    \mathrm{d}\xi =   \mathrm{d}\xi    \partial_{\theta}, \Label{Eqn:ComParDif11} &&\partial_{\theta}    \mathrm{d}z =  -q      \mathrm{d}z    \partial_{\theta}, \Label{Eqn:ComParDif12}\\
  \\
& \partial_z    \mathrm{d}x =   \mathrm{d}x    \partial_z, \Label{Eqn:ComParDif13} &&\partial_z    \mathrm{d}\xi =  - q^{-1}    \mathrm{d}\xi    \partial_z, \Label{Eqn:ComParDif14}\\
 & \partial_z    \mathrm{d}\theta =  - q^{-1}    \mathrm{d}\theta    \partial_z, \Label{Eqn:ComParDif15} &&\partial_z    \mathrm{d}z =        \mathrm{d}z    \partial_z. \Label{Eqn:ComParDif16}
\end{align*}
\end{subequations}
\end{proposition}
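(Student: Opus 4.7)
The plan is to derive each relation by applying the de~Rham differential $\mathrm{d}$ to an expression of the form $\mathrm{d}x^j\cdot f$ in two distinct ways and equating the results, in close analogy with the proof of Theorem~\ref{thm:ComPartCoor}. Fix an arbitrary $f\in\mathcal{A}_q$ and a generator $x^j\in\{x,\xi,\theta,z\}$. On one hand, the graded Leibniz rule together with $\mathrm{d}(\mathrm{d}x^j)=0$ gives
$$\mathrm{d}(\mathrm{d}x^j\cdot f)\;=\;-\mathrm{d}x^j\cdot\mathrm{d}f\;=\;-\sum_i \mathrm{d}x^j\,\mathrm{d}x^i\,\partial_i f,$$
and Theorem~\ref{thm:ComRulesxdx} lets me rewrite each product as $\mathrm{d}x^j\mathrm{d}x^i=r_{ji}\,\mathrm{d}x^i\mathrm{d}x^j$ for an explicit braiding factor $r_{ji}\in\{\pm 1,\pm q,\pm q^{-1}\}$, turning the expression into $-\sum_i r_{ji}\,\mathrm{d}x^i\mathrm{d}x^j\,\partial_i f$. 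On the other hand, applying the formula $\mathrm{d}=\sum_i \mathrm{d}x^i\partial_i$ directly to the $1$-form $\mathrm{d}x^j\cdot f$ yields
$$\mathrm{d}(\mathrm{d}x^j\cdot f)\;=\;\sum_i \mathrm{d}x^i\,\partial_i(\mathrm{d}x^j\cdot f).$$

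Under the ansatz $\partial_i \mathrm{d}x^j=A_{ij}\,\mathrm{d}x^j\partial_i$, the second expression becomes $\sum_i A_{ij}\,\mathrm{d}x^i\mathrm{d}x^j\,\partial_i f$, and comparing the coefficient of $\mathrm{d}x^i\mathrm{d}x^j\,\partial_i f$ for each fixed pair $(i,j)$ forces the master identity $A_{ij}=-r_{ji}$. Reading the $r_{ji}$ off Theorem~\ref{thm:ComRulesxdx} directly reproduces all twelve off-diagonal relations of the proposition, as well as the two diagonal relations $\partial_\xi \mathrm{d}\xi=-\mathrm{d}\xi\partial_\xi$ and $\partial_\theta \mathrm{d}\theta=-\mathrm{d}\theta\partial_\theta$: because $(\mathrm{d}\xi)^2$ and $(\mathrm{d}\theta)^2$ are nonzero, the diagonal coefficients in those two cases do genuinely constrain $A_{\xi\xi}$ and $A_{\theta\theta}$.

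The only rules left over are $\partial_x \mathrm{d}x=\mathrm{d}x\partial_x$ and $\partial_z \mathrm{d}z=\mathrm{d}z\partial_z$, which escape the argument above because $(\mathrm{d}x)^2=(\mathrm{d}z)^2=0$ makes the corresponding coefficient vanish identically and leaves $A_{xx}$ and $A_{zz}$ undetermined. For these I would give a short separate verification: extend $\partial_x$ (resp.\ $\partial_z$) to the full differential algebra as a graded derivation annihilating $\mathrm{d}x$ (resp.\ $\mathrm{d}z$), and use that $x$ and $\mathrm{d}x$ (resp.\ $z$ and $\mathrm{d}z$) strictly commute by Theorem~\ref{thm:valcomrules}, so that no braiding factor can appear; equivalently, one compares the two sides on the PBW basis \eqref{Eqn:PBWbasis} as in the proof of the preceding proposition. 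The main obstacle is the sheer bookkeeping of signs and $q$-factors across sixteen relations, but the master identity $A_{ij}=-r_{ji}$ reduces the bulk of the work to a single table look-up, with only the two degenerate directions needing a separate check.
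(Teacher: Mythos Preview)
Your argument is correct but follows a genuinely different route from the paper. The paper works from the coordinate--differential relations of Theorem~\ref{thm:valcomrules}: it postulates $\partial_{x^a}(x^b\,\mathrm{d}x^c)=\delta_a^{\ b}\,\mathrm{d}x^c$ and then applies $\partial_{x^a}$ to each identity $x^b\,\mathrm{d}x^c=c_{bc}\,\mathrm{d}x^c\,x^b$, reading off the braiding factor in $\partial_{x^a}\,\mathrm{d}x^c$ directly. You instead work from the differential--differential relations of Theorem~\ref{thm:ComRulesxdx}, enforcing $\mathrm{d}^2=0$ on $\mathrm{d}x^j\cdot f$ to obtain the master identity $A_{ij}=-r_{ji}$. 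Both are legitimate consistency arguments; what each buys is this: the paper's method handles all sixteen relations uniformly, including $\partial_x\,\mathrm{d}x$ and $\partial_z\,\mathrm{d}z$, since $x\,\mathrm{d}x=\mathrm{d}x\,x$ and $z\,\mathrm{d}z=\mathrm{d}z\,z$ are nondegenerate inputs there; your method compresses the off-diagonal bookkeeping into a single table look-up, at the price of the two degenerate diagonals where $(\mathrm{d}x)^2=(\mathrm{d}z)^2=0$ kills the relevant coefficient. Your patch for those two cases is acceptable but a little soft as written---``extend $\partial_x$ as a graded derivation annihilating $\mathrm{d}x$'' is closer to a definition than a derivation; the cleanest fix is simply to borrow the paper's trick for just those two entries, applying $\partial_x$ to $x\,\mathrm{d}x=\mathrm{d}x\,x$ and $\partial_z$ to $z\,\mathrm{d}z=\mathrm{d}z\,z$.
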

\begin{proof}
The above relations are found by using $\partial_{x^a}(x^b    \mathrm{d}x^c) = \delta_{a}^{\:\: b}     \mathrm{d}x^c$, where we have set $x^a := (x, \xi, \theta,z )$, and applying this to \eqref{Eqn:ValCom1} to \eqref{Eqn:ValCom16}. For instance,
$$\partial_x(x     \mathrm{d}\xi) = \mathrm{d}\xi = q     \partial _x(\mathrm{d}\xi    x),$$
where we have used \eqref{Eqn:ValCom2}. For consistency this implies that
$$\partial_x     \mathrm{d}\xi = q^{-1}     \mathrm{d}\xi     \partial_x,$$
i.e, \eqref{Eqn:ComParDif2} is established. All the other relations follow from similar considerations and so we omit details.
\end{proof}

\section{Concluding remarks}\label{sec:ConRem}
In this paper, we defined a  $\mathbb{Z}_2^2$-graded generalisation of Manin's quantum superplane and presented a bicovariant differential calculi rather explicitly. In this respect, we have a concrete example of a noncommutative differential $\mathbb{Z}_2^2$-geometry. To our knowledge, the double-graded quantum superplane is the first such example to be defined and studied. We must remark that there has been some renewed interest in $\mathbb{Z}_2^n$-gradings in physics, see for example \cite{Aiz/Isa/Seg,Aiz/Kuz/Tan/Top,Aiz/Seg,bru,tol14,tol14a}. It is not clear if these `higher gradings' play a fundamental r\^{o}le in physics in the same way as $\mathbb{Z}_2$-gradings do. However, the results of the aforementioned papers suggest that systems that are $\mathbb{Z}_2^n$-graded are not as uncommon as one might initially think. Thus, we believe, that further work on noncommutative $\mathbb{Z}_2^n$-geometry is warranted and that further links with physics will be uncovered. Indeed, we have only scratched the surface in this paper and have focused on mathematical questions.

\section*{Acknowledgements}
The second author (S.D.) wishes to express his sincere thankfulness and
deep gratitude to Vladimir Akulov, Thomas Nordahl, Vladimir Tkach, Alexander Voronov and Raimund Vogl for numerous fruitful discussions and valuable support.

\mbox{} \vskip 2cm

\end{document}